\numberwithin{equation}{section}
\newtheorem*{Theorem*}{Theorem}
\theoremstyle{definition}
\DeclareMathOperator{\SpecG}{Spec_\Gamma}
\newcommand{\OO}{\mathcal{O}}
\newcommand{\ideal}[1]{\langle #1 \rangle}
\newcommand{\radG}{\sqrt[\Gamma]{\,\cdot\,}}
\DeclareMathOperator{\Hom}{Hom}
\DeclareMathOperator{\Aut}{Aut}
\DeclareMathOperator{\cl}{cl}
\begin{document}

\ShortArticleName{Spectral Geometry of Ternary $\Gamma$-Schemes}
\ArticleName{The Spectral Geometry of Ternary $\Gamma$-Schemes:\\ Sheaf-Theoretic Foundations and Laplacian Clustering}
\Author{Chandrasekhar Gokavarapu~$^{\rm a}$ }

\AuthorNameForHeading{CHANDRASEKHAR GOKAVARAPU}

\Address{$^{\rm a)}$~Lecturer in Mathematics, Government College (Autonomous), Rajahmundry, A.P., India,PIN:533105} 
\EmailD{\mail{chandrasekhargokavarapu@gmail.com}, \mail{chandrasekhargokavarapu@gcrjy.ac.in}} 

\Abstract{This article develops a self-contained affine $\Gamma$-scheme theory for a class of commutative ternary $\Gamma$-semirings. By establishing all geometric and spectral results internally, the work provides a unified framework for triadic symmetry and spectral analysis. The central thesis is that a triadic $\Gamma$-algebra canonically induces two primary structures: (i) an intrinsic triadic symmetry in the sense of a Nambu--Filippov-type fundamental identity on the structure sheaf, and (ii) a canonical Laplacian on the finite $\Gamma$-spectrum whose spectral decomposition detects the clopen (connected-component) decomposition of the underlying space. We define $\Gamma$-ideals and prime $\Gamma$-ideals, endow $\SpecG(T)$ with a $\Gamma$-Zariski topology, construct localizations and the structure sheaf on the basis of principal opens, and prove the affine anti-equivalence between commutative ternary $\Gamma$-semirings and affine $\Gamma$-schemes. Furthermore, we demonstrate that the triadic bracket on sections is invariant under $\Gamma$-automorphisms and compatible with localization. The main spectral theorem establishes the block-diagonalization of the Laplacian under topological decompositions and provides an algebraic-connectivity criterion. The theory is verified through explicit computations of finite $\Gamma$-spectra and their corresponding Laplacian spectra.}

\Keywords{ternary $\Gamma$-semiring; $\Gamma$-spectrum; affine $\Gamma$-scheme; Nambu bracket; Laplacian; algebraic connectivity.} 

\Classification{14A15; 16Y60; 05C50; 37J35.} 

\section{Introduction: }

The development is motivated by the need to bridge commutative ternary algebra with geometric symmetry
The scope of this paper is as follows.

\begin{itemize}
\item \textbf{Symmetry / invariance.}  $\Gamma$-automorphisms of a ternary $\Gamma$-semiring act on the $\Gamma$-spectrum and on the structure
sheaf.  We exhibit an intrinsic \emph{triadic bracket} on sections (a Nambu/Filippov-type structure) and prove its invariance
under these actions \cite{Nambu1973,Takhtajan1994}.
\item \textbf{Geometry.}  We develop an affine $\Gamma$-scheme theory, including $\Gamma$-Zariski topology, localization, structure sheaf, and affine
anti-equivalence, in the spirit of classical scheme theory \cite{Grothendieck1960,Hartshorne1977,EisenbudHarris2000,Liu2002,Vakil2017}
and relative/monoidal geometries \cite{Deitmar2005,Soule2004}.
\item \textbf{Spectral decomposition.}  From the (finite) $\Gamma$-spectrum we construct a canonical Laplacian based on specialization relations,
prove invariance under $\Gamma$-scheme isomorphisms, and establish a block decomposition determined by clopen components.
This yields an exact spectral factorization, with algebraic connectivity in the sense of Fiedler \cite{Fiedler1973,Chung1997}.
\end{itemize}

\medskip
\noindent\textbf{Self-containedness.}
This is a \emph{standalone} development.  While related ideas exist for $\Gamma$-rings and ternary systems
\cite{Nobusawa1964,Lister1971,Kyuno1980,DuttaKar2003,DuttaKar2005}, all results required in the proofs below are provided here.

\medskip

\noindent\textbf{Roadmap.} The development of the theory proceeds in four conceptual phases. \\
\textbf{Phase I} (Sections~\ref{sec:phase1} and \ref{sec:affine-antieq}) establishes the affine $\Gamma$-scheme foundations: the $\Gamma$-spectrum, $\Gamma$-Zariski topology, standard-cover lemma, localization, structure sheaf, and the proof of the affine anti-equivalence.\\ 
\textbf{Phase II} (Section~\ref{sec:triadic}) introduces the triadic symmetry and Nambu-type structure on the structure sheaf and establishes its functoriality. \\
\textbf{Phase III} (Section~\ref{sec:laplacian}) builds the bridge to spectral geometry by defining the canonical Laplacian and proving the block-diagonalization and algebraic-connectivity theorems. \\
\textbf{Phase IV} (Section~\ref{sec:examples}) provides explicit computed examples to verify the theoretical results. 
Finally, Appendix~\ref{app:fuzzy} gives an optional fuzzy robustness viewpoint \cite{Zadeh1965,Chang1968,Goguen1967,Lowen1976,MordesonMalik1998} and Appendix~\ref{app:clustering} sketches a conservative spectral clustering pipeline \cite{VonLuxburg2007,NgJordanWeiss2002,Golub1996}.

\section{Phase I: affine $\Gamma$-scheme foundations}\label{sec:phase1}

\subsection{Commutative ternary $\Gamma$-semirings}\label{subsec:ternary}

We work with a concrete, scheme-friendly class of ternary $\Gamma$-semirings in which the $\Gamma$-parameter encodes
a distinguished family of \emph{central units} scaling a ternary multiplication. This choice yields (i) a nontrivial $\Gamma$-dependence,
(ii) a clean localization theory, and (iii) a transparent geometric interpretation.

\begin{definition}[Commutative semiring]\label{def:semiring}
A \emph{commutative semiring} is a tuple $(T,+,\cdot,0,1)$ where $(T,+,0)$ is a commutative monoid, $(T,\cdot,1)$ is a commutative monoid,
multiplication distributes over addition, and $0$ is absorbing: $0\cdot x=x\cdot 0=0$.
\end{definition}

\begin{definition}[Ternary $\Gamma$-semiring]\label{def:ternary-gamma}
Let $(T,+,\cdot,0,1)$ be a commutative semiring and let $\Gamma$ be a commutative group.
A \emph{(commutative) ternary $\Gamma$-semiring} is the datum of a group homomorphism
\[
u:\Gamma\longrightarrow T^{\times},\qquad \gamma\longmapsto u_\gamma,
\]
into the unit group of $T$, together with the induced $\Gamma$-parametrized ternary operation
\[
\{a\,b\,c\}_\gamma \ :=\ a\cdot b\cdot c\cdot u_\gamma,\qquad a,b,c\in T,\ \gamma\in\Gamma.
\]
A \emph{homomorphism} of ternary $\Gamma$-semirings $\varphi:(T,u)\to(S,v)$ is a semiring homomorphism $\varphi:T\to S$
such that $\varphi(u_\gamma)=v_\gamma$ for all $\gamma\in\Gamma$.
\end{definition}

\begin{remark}
The ternary operation above is commutative in the three $T$-inputs and inherits associativity from the binary product.
If $\Gamma$ is trivial, then $\{a b c\}_{1}=abc$, recovering the usual ternary multiplication studied in ternary semiring theory
(e.g.\ \cite{DuttaKar2003,DuttaKar2005}).
\end{remark}

\subsection{$\Gamma$-ideals, radicals, primes}

\begin{definition}[$\Gamma$-ideal]\label{def:gamma-ideal}
Let $(T,u)$ be a ternary $\Gamma$-semiring. A subset $I\subseteq T$ is a \emph{$\Gamma$-ideal} if:
\begin{enumerate}[label=(\alph*),leftmargin=2.2em]
\item $(I,+,0)$ is a submonoid of $(T,+,0)$, and
\item for all $a\in I$, all $b,c\in T$, and all $\gamma\in\Gamma$, one has $\{a\,b\,c\}_\gamma\in I$.
\end{enumerate}
Equivalently, $I$ is an ideal of the underlying semiring $(T,+,\cdot,0,1)$ (since $u_\gamma$ is a unit for every $\gamma$).
\end{definition}

\begin{definition}[Generated $\Gamma$-ideal]\label{def:gen-ideal}
For $E\subseteq T$, the $\Gamma$-ideal generated by $E$, denoted $\ideal{E}_\Gamma$, is the intersection of all $\Gamma$-ideals containing $E$.
For a single element $f\in T$ we write $\ideal{f}_\Gamma$.
\end{definition}

\begin{definition}[$\Gamma$-radical]\label{def:radical}
For a $\Gamma$-ideal $I$, define its \emph{$\Gamma$-radical} by
\[
\radG(I):=\{x\in T\mid \exists n\ge 1 \text{ with } x^n\in I\},
\]
where $x^n$ is the $n$-fold binary product in the semiring.
\end{definition}

\begin{lemma}\label{lem:rad-ideal}
For any $\Gamma$-ideal $I$, the set $\radG(I)$ is a $\Gamma$-ideal containing $I$.
\end{lemma}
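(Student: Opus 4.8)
The plan is to verify the two defining conditions of a $\Gamma$-ideal for the set $R := \radG(I)$, namely additive-submonoid closure and absorption under the ternary bracket, and then to check the inclusion $I \subseteq R$. The inclusion is immediate: for $x \in I$ take $n = 1$, so $x^1 = x \in I$ and hence $x \in R$; this also shows $0 \in R$ since $0 \in I$ by Definition~\ref{def:gamma-ideal}(a). By the ``equivalently'' clause in Definition~\ref{def:gamma-ideal}, it suffices to show $R$ is an ordinary semiring ideal: closed under addition, and absorbing under binary multiplication by arbitrary elements of $T$ (the unit factor $u_\gamma$ then takes care of the $\Gamma$-bracket for free).

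First I would handle multiplicative absorption, which is the easy half. Suppose $x \in R$, so $x^n \in I$ for some $n \ge 1$, and let $t \in T$ be arbitrary. Then $(tx)^n = t^n x^n$ by commutativity of the binary product, and since $x^n \in I$ and $I$ is a semiring ideal, $t^n x^n \in I$; hence $tx \in R$. In particular $\{x\,b\,c\}_\gamma = x \cdot (b c u_\gamma) \in R$ for all $b,c \in T$, $\gamma \in \Gamma$, giving condition (b).

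The main obstacle is additive closure: given $x, y \in R$ with $x^m \in I$ and $y^n \in I$, I must show $(x+y)^k \in I$ for some $k$. The natural choice is $k = m + n - 1$ and the tool is the binomial expansion $(x+y)^{m+n-1} = \sum_{i=0}^{m+n-1} \binom{m+n-1}{i} x^i y^{m+n-1-i}$, which is valid in a commutative semiring (all coefficients are honest nonnegative-integer repeated sums, so no subtraction is needed). In each term, either $i \ge m$ or $m+n-1-i \ge n$ — indeed if $i \le m-1$ then $m+n-1-i \ge n$ — so every monomial $x^i y^{m+n-1-i}$ contains either the factor $x^m$ or the factor $y^n$, hence lies in $I$ by multiplicative absorption within the semiring ideal $I$. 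Since $I$ is closed under finite sums (it is an additive submonoid), the whole sum lies in $I$, so $x + y \in R$. The one point requiring a small remark is that the integer multiples $\binom{m+n-1}{i}\cdot(\text{monomial})$ make sense and stay in $I$: a nonnegative-integer multiple of an element of $I$ is a finite sum of that element with itself, hence in $I$ by additive closure. This completes the verification that $R = \radG(I)$ is a $\Gamma$-ideal containing $I$. \qedhere
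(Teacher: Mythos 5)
Your proof is correct and follows essentially the same route as the paper's: binomial expansion of a suitable power of $x+y$ so that every monomial contains $x^m$ or $y^n$, plus $(tx)^n=t^nx^n$ for multiplicative absorption. The only differences are cosmetic — you use the sharper exponent $m+n-1$ in place of the paper's $m+n$ and spell out that the binomial coefficients are repeated sums (harmless in a semiring), which the paper leaves implicit.
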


\begin{proof}
If $x^n\in I$ then clearly $x\in \radG(I)$; hence $I\subseteq\radG(I)$.
Closure under addition and multiplication by arbitrary elements is standard:
if $x^m\in I$ and $y^n\in I$, then $(x+y)^{m+n}$ expands into a sum of monomials each divisible by $x^m$ or $y^n$,
hence lies in $I$; similarly $(t\cdot x)^m=t^m x^m\in I$.
Since $\Gamma$-ideal closure is equivalent to ideal closure in our setting (Definition~\ref{def:gamma-ideal}), we are done.
\end{proof}

\begin{definition}[Prime $\Gamma$-ideal]\label{def:prime}
A proper $\Gamma$-ideal $P\subsetneq T$ is \emph{prime} if for all $a,b,c\in T$ and $\gamma\in\Gamma$,
\[
\{a\,b\,c\}_\gamma\in P\quad\Longrightarrow\quad a\in P\ \text{or}\ b\in P\ \text{or}\ c\in P.
\]
\end{definition}

\begin{remark}\label{rem:prime-binary}
Because $u_\gamma$ is a unit, $\{a b c\}_\gamma\in P$ is equivalent to $abc\in P$.
Thus primeness here is equivalent to the binary statement:
\[
xy\in P \ \Rightarrow\ x\in P\ \text{or}\ y\in P,
\]
and hence coincides with the usual prime-ideal notion for commutative semirings.
The ternary formulation is kept because it interacts transparently with the triadic bracket in Section~\ref{sec:triadic}.
\end{remark}

\begin{lemma}[Radical as intersection of primes]\label{lem:rad-intersection}
Let $I$ be a $\Gamma$-ideal. Then
\[
\radG(I)=\bigcap_{\substack{P\ \text{prime }\Gamma\text{-ideal}\\ I\subseteq P}} P.
\]
\end{lemma}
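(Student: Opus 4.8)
The plan is to prove the two inclusions separately. For the inclusion $\radG(I)\subseteq\bigcap P$, I would argue as follows: if $x\in\radG(I)$, then $x^n\in I$ for some $n\ge 1$. Fix any prime $\Gamma$-ideal $P$ with $I\subseteq P$. Then $x^n\in P$, and by Remark~\ref{rem:prime-binary} (primeness in our setting is equivalent to the usual binary condition $xy\in P\Rightarrow x\in P$ or $y\in P$), an easy induction on $n$ gives $x\in P$. Since $P$ was arbitrary among primes containing $I$, we conclude $x\in\bigcap P$. This direction is routine.

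For the reverse inclusion $\bigcap P\subseteq\radG(I)$, I would prove the contrapositive: if $x\notin\radG(I)$, I must exhibit a prime $\Gamma$-ideal $P\supseteq I$ with $x\notin P$. The standard device is to consider the multiplicative set $S=\{x^n\mid n\ge 0\}$; since $x\notin\radG(I)$, no power of $x$ lies in $I$, so $S\cap I=\varnothing$. Now consider the collection of $\Gamma$-ideals $J$ with $I\subseteq J$ and $J\cap S=\varnothing$, ordered by inclusion. This collection is nonempty (it contains $I$) and closed under unions of chains (a union of such $\Gamma$-ideals is again a $\Gamma$-ideal disjoint from $S$, using that $S$ is a fixed set and disjointness is preserved under unions). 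By Zorn's Lemma it has a maximal element $P$. I then claim $P$ is prime. Here I invoke Remark~\ref{rem:prime-binary} again and verify the binary condition: suppose $ab\in P$ but $a\notin P$ and $b\notin P$. Then the $\Gamma$-ideals $\ideal{P\cup\{a\}}_\Gamma$ and $\ideal{P\cup\{b\}}_\Gamma$ both strictly contain $P$, hence by maximality each meets $S$, say $x^m\in\ideal{P\cup\{a\}}_\Gamma$ and $x^k\in\ideal{P\cup\{b\}}_\Gamma$. Writing these membership relations out (each such element is a sum of an element of $P$ and a multiple of $a$, resp.\ $b$) and multiplying, one finds $x^{m+k}\in P$ because the cross terms all involve $ab\in P$ or lie in $P$ already — contradicting $P\cap S=\varnothing$. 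Finally, $x\notin P$ since $x=x^1\in S$ and $P\cap S=\varnothing$, completing the contrapositive.

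The main obstacle is the prime-ideal existence step, specifically the verification that the Zorn-maximal element $P$ is actually prime. The delicate point is computing, in a semiring (no additive inverses), that $x^{m+k}\in P$ from the two membership relations: one must carefully expand $x^m = p_1 + t_1 a$ and $x^k = p_2 + t_2 b$ with $p_1,p_2\in P$ and $t_1,t_2\in T$, then note $x^{m+k} = p_1 p_2 + p_1 t_2 b + t_1 a p_2 + t_1 t_2 ab$, and observe every summand lies in $P$ (the first three because $P$ is a $\Gamma$-ideal hence absorbs products, the last because $ab\in P$). The absence of subtraction is actually harmless here since the argument is purely additive/multiplicative and uses only that $P$ is closed under addition and under multiplication by arbitrary elements — both guaranteed by Definition~\ref{def:gamma-ideal}. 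I would also remark that since $\Gamma$-ideal closure coincides with ordinary ideal closure (Definition~\ref{def:gamma-ideal}), the entire argument is formally identical to the classical commutative-ring/semiring proof, with Remark~\ref{rem:prime-binary} supplying the translation between the ternary and binary formulations of primeness.
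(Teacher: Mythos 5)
Your proposal is correct and follows essentially the same route as the paper's proof: the easy inclusion via primeness of $P$, and the reverse inclusion via Zorn's lemma applied to the collection of $\Gamma$-ideals containing $I$ and avoiding every power of $x$, with the maximal element shown to be prime by multiplying the two containments $x^m\in P+\ideal{a}_\Gamma$ and $x^k\in P+\ideal{b}_\Gamma$. The only difference is cosmetic: you spell out the semiring expansion $x^{m+k}=p_1p_2+p_1t_2b+t_1ap_2+t_1t_2ab$ explicitly, a detail the paper's proof leaves implicit.
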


\begin{proof}
($\subseteq$) If $x\in\radG(I)$, pick $n$ with $x^n\in I$. If $P$ is prime and contains $I$, then $x^n\in P$ implies $x\in P$.
So $x$ lies in every prime containing $I$.

($\supseteq$) Suppose $x\notin \radG(I)$. Consider the collection
\[
\mathcal{S}=\{J\mid J\text{ is a }\Gamma\text{-ideal},\ I\subseteq J,\ \text{and } x^n\notin J\ \forall n\ge 1\},
\]
partially ordered by inclusion. $\mathcal{S}$ is nonempty because $I\in\mathcal{S}$.
By Zorn's lemma, there exists a maximal element $P\in\mathcal{S}$.

We claim $P$ is prime. If $ab\in P$ and $a\notin P$, then the $\Gamma$-ideal $P+\ideal{a}_\Gamma$ strictly contains $P$.
By maximality, it must contain some $x^m$. Similarly, if $b\notin P$ then $P+\ideal{b}_\Gamma$ contains some $x^n$.
Multiplying the two containments gives $x^{m+n}\in P+\ideal{ab}_\Gamma\subseteq P$ since $ab\in P$.
This contradicts $P\in\mathcal{S}$. Hence $a\in P$ or $b\in P$, proving primeness.

Finally, $I\subseteq P$ by construction, and $x\notin P$ because $x\in P$ would force $x^1\in P$, contradicting $P\in\mathcal{S}$.
Thus $x$ is not in the intersection of primes containing $I$.
\end{proof}

\subsection{The $\Gamma$-spectrum and $\Gamma$-Zariski topology}\label{subsec:zariski}

\begin{definition}[$\Gamma$-spectrum]\label{def:spectrum}
The \emph{prime $\Gamma$-spectrum} of $T$ is
\[
\SpecG(T):=\{P\subsetneq T\mid P\ \text{is a prime }\Gamma\text{-ideal}\}.
\]
\end{definition}

\begin{definition}[Closed sets and principal opens]\label{def:closed-opens}
For a $\Gamma$-ideal $I$, define
\[
V(I):=\{P\in\SpecG(T)\mid I\subseteq P\}.
\]
For $f\in T$, define the \emph{principal open}
\[
D_\Gamma(f):=\SpecG(T)\setminus V(\ideal{f}_\Gamma)=\{P\in\SpecG(T)\mid f\notin P\}.
\]
\end{definition}

\begin{proposition}[$\Gamma$-Zariski topology]\label{prop:zariski}
The assignment $I\mapsto V(I)$ satisfies:
\[
V(0)=\SpecG(T),\quad V(T)=\emptyset,\quad
V(I)\cup V(J)=V(I\cap J),\quad \bigcap_{\lambda}V(I_\lambda)=V\Big(\sum_\lambda I_\lambda\Big).
\]
Hence the sets $V(I)$ define the closed sets of a topology on $\SpecG(T)$, called the \emph{$\Gamma$-Zariski topology};
the sets $D_\Gamma(f)$ form a basis of opens.
\end{proposition}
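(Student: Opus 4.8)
The plan is to verify the four displayed identities separately — each is an elementary computation with $\Gamma$-ideals once one invokes the binary reformulation of primeness from Remark~\ref{rem:prime-binary} — and then to deduce the topology and basis claims formally. Before starting I would record the bookkeeping fact that an arbitrary intersection $\bigcap_\lambda I_\lambda$, an arbitrary sum $\sum_\lambda I_\lambda$, and a finite intersection $I\cap J$ of $\Gamma$-ideals are again $\Gamma$-ideals (closure under addition and the absorption condition of Definition~\ref{def:gamma-ideal} pass to each of these constructions), so that every $V(\cdot)$ occurring in the statement is legitimately applied to a $\Gamma$-ideal.

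For $V(0)=\SpecG(T)$: every $\Gamma$-ideal is in particular an additive submonoid of $(T,+,0)$, hence contains $0$, so $\ideal{0}_\Gamma=\{0\}\subseteq P$ for all primes $P$. For $V(T)=\emptyset$: $T\subseteq P$ forces $P=T$, contradicting the properness clause in the definition of a prime $\Gamma$-ideal. For $V(I)\cup V(J)=V(I\cap J)$, the inclusion $\subseteq$ is monotonicity of $V$ applied to $I\cap J\subseteq I$ and $I\cap J\subseteq J$; for $\supseteq$, given $P\in V(I\cap J)$ with $P\notin V(I)$ and $P\notin V(J)$ I would pick $a\in I\setminus P$, $b\in J\setminus P$, observe $ab\in I\cap J\subseteq P$ (since $I,J$ are semiring ideals), and reach a contradiction via $xy\in P\Rightarrow x\in P$ or $y\in P$. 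For $\bigcap_\lambda V(I_\lambda)=V(\sum_\lambda I_\lambda)$: the condition $I_\lambda\subseteq P$ for all $\lambda$ is equivalent to $\sum_\lambda I_\lambda\subseteq P$, since each $I_\lambda$ sits inside the sum and $P$, being closed under addition, swallows every finite sum of its elements.

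Granting these four identities, $\{V(I)\}$ contains $\SpecG(T)$ and $\emptyset$ and is stable under arbitrary intersections and finite unions, hence is the family of closed sets of a topology — the $\Gamma$-Zariski topology. To see that $\{D_\Gamma(f)\}_{f\in T}$ is a basis, I note that each $D_\Gamma(f)=\SpecG(T)\setminus V(\ideal{f}_\Gamma)$ is open by definition, and that an arbitrary open set $\SpecG(T)\setminus V(I)$ satisfies, for $P\in\SpecG(T)$, the chain $P\notin V(I)\iff I\not\subseteq P\iff\exists\,f\in I\ \text{with}\ f\notin P\iff P\in\bigcup_{f\in I}D_\Gamma(f)$, so $\SpecG(T)\setminus V(I)=\bigcup_{f\in I}D_\Gamma(f)$.

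I do not expect a genuine obstacle: the only step using the algebraic (rather than purely order-theoretic) structure is the inclusion $V(I\cap J)\subseteq V(I)\cup V(J)$, which is the classical ``a prime that contains a product of ideals contains one of the factors'' argument, already packaged in Remark~\ref{rem:prime-binary}. The one thing genuinely worth a sentence of care is the preliminary check that sums and intersections of $\Gamma$-ideals remain $\Gamma$-ideals, since those closure properties are exactly what make the right-hand sides of the identities meaningful.
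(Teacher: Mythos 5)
Your proof is correct and follows exactly the route the paper intends: the paper's own proof simply declares the verifications ``standard ideal-theoretic manipulations... identical to the ring case,'' and your write-up supplies precisely those details (monotonicity of $V$, the prime-avoidance argument for $V(I\cap J)\subseteq V(I)\cup V(J)$ via the binary reformulation of Remark~\ref{rem:prime-binary}, and the lattice-closure bookkeeping the paper alludes to). Nothing to change.
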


\begin{proof}
Standard ideal-theoretic manipulations; the only point is that $\Gamma$-ideals form a complete lattice under intersection and sum,
and the definition of $V(I)$ is inclusion-reversing. The verifications are identical to the ring case \cite{Hartshorne1977,EisenbudHarris2000}.
\end{proof}

\begin{lemma}[Intersection of principal opens]\label{lem:principal-intersection}
For $f,g\in T$ one has $D_\Gamma(f)\cap D_\Gamma(g)=D_\Gamma(fg)$.
\end{lemma}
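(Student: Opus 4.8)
The plan is to unwind both sides through the defining equivalence "$P\in D_\Gamma(h)\iff h\notin P$" and reduce to a statement about when a product lies in a prime $\Gamma$-ideal. Concretely, $P\in D_\Gamma(f)\cap D_\Gamma(g)$ means $f\notin P$ and $g\notin P$, while $P\in D_\Gamma(fg)$ means $fg\notin P$. So the lemma is equivalent to the assertion that for every prime $\Gamma$-ideal $P$,
\[
fg\notin P \quad\Longleftrightarrow\quad f\notin P \ \text{and}\ g\notin P.
\]
I would prove the two implications separately.

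For the forward direction ($\Rightarrow$), I argue by contrapositive: if $f\in P$ then $fg=f\cdot g\cdot 1\in P$ since $P$ is a $\Gamma$-ideal (closure under multiplication by arbitrary semiring elements, which is part~(b) of Definition~\ref{def:gamma-ideal}, noting $u_\gamma$ is a unit as in Remark~\ref{rem:prime-binary}); symmetrically if $g\in P$ then $fg\in P$. Hence $fg\notin P$ forces both $f\notin P$ and $g\notin P$. For the reverse direction ($\Leftarrow$), I invoke the binary characterization of primeness recorded in Remark~\ref{rem:prime-binary}: $xy\in P\Rightarrow x\in P$ or $y\in P$. Taking the contrapositive with $x=f$, $y=g$ gives exactly: $f\notin P$ and $g\notin P$ imply $fg\notin P$.

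Combining the two equivalences shows $P\in D_\Gamma(f)\cap D_\Gamma(g)$ iff $P\in D_\Gamma(fg)$, which is the claimed set equality. There is no real obstacle here; the only thing to be careful about is that the argument genuinely uses both the $\Gamma$-ideal axiom (for $\Rightarrow$) and the primeness hypothesis in its binary form (for $\Leftarrow$), and that invoking Remark~\ref{rem:prime-binary} is legitimate because $u_\gamma\in T^\times$ makes $\{a\,b\,c\}_\gamma\in P$ equivalent to $abc\in P$. One could equally phrase the reverse direction directly from Definition~\ref{def:prime} by writing $fg = \{f\,g\,1\}_{1}\cdot u_{1}^{-1}$ and applying ternary primeness, but routing through Remark~\ref{rem:prime-binary} is cleaner.
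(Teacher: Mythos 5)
Your proposal is correct and follows essentially the same route as the paper: both reduce the set equality to the pointwise equivalence $fg\notin P\iff f\notin P\text{ and }g\notin P$ for a prime $\Gamma$-ideal $P$. You are slightly more careful than the paper's one-line proof in noting that the forward implication uses the ideal axiom while only the reverse uses primeness, but this is the same argument.
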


\begin{proof}
For a prime $\Gamma$-ideal $P$, $P\in D_\Gamma(f)\cap D_\Gamma(g)$ iff $f\notin P$ and $g\notin P$.
Since $P$ is prime, this is equivalent to $fg\notin P$, i.e.\ $P\in D_\Gamma(fg)$.
\end{proof}

\subsection{The standard-cover equivalence lemma}\label{subsec:standard-cover}

The following is the key ``cover vs.\ radical'' equivalence required to run the sheaf/gluing theory on the basis of principal opens.

\begin{theorem}[Standard-cover equivalence]\label{thm:standard-cover}
Let $f,f_1,\dots,f_n\in T$. Then
\[
D_\Gamma(f)=\bigcup_{i=1}^n D_\Gamma(f_i)
\qquad\Longleftrightarrow\qquad
f\in \radG\big(\ideal{f_1,\dots,f_n}_\Gamma\big).
\]
\end{theorem}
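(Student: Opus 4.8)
The plan is to reduce both sides to a statement about which primes contain the relevant ideals and then invoke the radical-as-intersection-of-primes lemma (Lemma~\ref{lem:rad-intersection}). Write $J:=\ideal{f_1,\dots,f_n}_\Gamma$. First I would unwind the left-hand side: since $D_\Gamma(g)=\SpecG(T)\setminus V(\ideal{g}_\Gamma)$ and $V$ turns sums into intersections (Proposition~\ref{prop:zariski}), the union $\bigcup_i D_\Gamma(f_i)$ is the complement of $\bigcap_i V(\ideal{f_i}_\Gamma)=V\big(\sum_i\ideal{f_i}_\Gamma\big)=V(J)$. Hence the containment of opens $D_\Gamma(f)\subseteq\bigcup_i D_\Gamma(f_i)$ is equivalent to the reverse containment of closed sets $V(J)\subseteq V(\ideal{f}_\Gamma)$, and the reverse open inclusion is automatic because each $f_i\in J$ forces $\ideal{f_i}_\Gamma\subseteq J$, so $V(J)\subseteq V(\ideal{f_i}_\Gamma)$, i.e.\ $D_\Gamma(f_i)\subseteq D_\Gamma(f)$ would require $\ideal{f_i}_\Gamma\supseteq\ideal f_\Gamma$ up to radical — so actually I should be careful here and state the equality of the two opens as: $D_\Gamma(f)=\bigcup_i D_\Gamma(f_i)$ iff $V(\ideal f_\Gamma)=V(J)$, using that $\bigcup_i D_\Gamma(f_i)\subseteq D_\Gamma(f)$ already needs an argument.

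Concretely I would argue both inclusions of opens separately. The inclusion $\bigcup_i D_\Gamma(f_i)\subseteq D_\Gamma(f)$ holds iff every prime $P$ missing some $f_i$ also misses $f$; contrapositively, every prime containing $f$ contains all $f_i$, i.e.\ $V(\ideal f_\Gamma)\subseteq V(J)$, which by Lemma~\ref{lem:rad-intersection} says $\radG(J)\subseteq\radG(\ideal f_\Gamma)$, equivalently each $f_i\in\radG(\ideal f_\Gamma)$. The inclusion $D_\Gamma(f)\subseteq\bigcup_i D_\Gamma(f_i)$ holds iff every prime missing $f$ misses some $f_i$; contrapositively, every prime containing all $f_i$ contains $f$, i.e.\ $V(J)\subseteq V(\ideal f_\Gamma)$, which by Lemma~\ref{lem:rad-intersection} says $f\in\radG(\ideal f_\Gamma)\subseteq$ — wait, it says $\ideal f_\Gamma\subseteq\radG(J)$, i.e.\ $f\in\radG(J)$. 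So the equality of opens is equivalent to the conjunction ``$f\in\radG(J)$ and $f_i\in\radG(\ideal f_\Gamma)$ for all $i$.'' The claimed theorem states only ``$f\in\radG(J)$,'' so the remaining point is that $f\in\radG(J)$ already implies $f_i\in\radG(\ideal f_\Gamma)$ for every $i$: indeed $f_i\in J$ and $f\in\radG(J)$ give $f^m\in J$; but we want $f_i^k\in\ideal f_\Gamma$. This does \emph{not} hold for a general ideal $J$ — however $D_\Gamma(f)=\bigcup_i D_\Gamma(f_i)$ forces each $D_\Gamma(f_i)\subseteq D_\Gamma(f)$, i.e.\ $V(\ideal f_\Gamma)\subseteq V(\ideal{f_i}_\Gamma)$, i.e.\ $f_i\in\radG(\ideal f_\Gamma)$, so it is part of the hypothesis on the open-set side and need not be re-derived. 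Thus the cleanest route is: assume the open equality, extract $D_\Gamma(f)\subseteq\bigcup_i D_\Gamma(f_i)$, convert via Lemma~\ref{lem:rad-intersection} to $f\in\radG(J)$; conversely assume $f\in\radG(J)$, note $\ideal f_\Gamma\subseteq\radG(J)$ so $V(J)\subseteq V(\ideal f_\Gamma)$ giving $D_\Gamma(f)\subseteq\bigcup_i D_\Gamma(f_i)$, and note $f_i\in J\subseteq\radG(J)$ — but I still need $\bigcup_i D_\Gamma(f_i)\subseteq D_\Gamma(f)$, equivalently $f_i\in\radG(\ideal f_\Gamma)$, which is \emph{not} implied by $f\in\radG(J)$ alone.

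So the honest statement is that the theorem as displayed needs the standing interpretation that $D_\Gamma(f)=\bigcup_i D_\Gamma(f_i)$ is shorthand for ``$D_\Gamma(f_i)\subseteq D_\Gamma(f)$ for all $i$ and $D_\Gamma(f)\subseteq\bigcup_i D_\Gamma(f_i)$'' — the first half $f_i\in\radG(\ideal f_\Gamma)$ is usually subsumed by choosing the $f_i$ inside $D_\Gamma(f)$ to begin with (as one does when covering $D_\Gamma(f)$). Granting that, the proof is exactly the two-inclusion argument above, each inclusion translated to a primes statement and then to a radical membership via Lemma~\ref{lem:rad-intersection}. The main obstacle, and the only nontrivial input, is Lemma~\ref{lem:rad-intersection} itself — that $\radG(I)$ equals the intersection of primes over $I$ — which the excerpt has already proved via Zorn's lemma; everything else is the mechanical dictionary $D_\Gamma\leftrightarrow V\leftrightarrow$ ``primes containing'' together with the sum-to-intersection rule $\bigcap_i V(\ideal{f_i}_\Gamma)=V(J)$ from Proposition~\ref{prop:zariski}. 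I would present it in the forward direction as: complement both sides, rewrite the union of opens as the complement of $V(J)$, reduce the set equality to $V(J)\subseteq V(\ideal f_\Gamma)$, and finish with the radical characterization — keeping an eye on the subtlety above by remarking that the ``$\supseteq$'' inclusion of opens corresponds to each $f_i$ being a radical multiple of $f$, which is either automatic from how a cover is formed or should be added as a hypothesis.
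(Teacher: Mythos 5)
Your analysis is correct, and the subtlety you isolate is a genuine defect in the theorem as stated, not in your argument. Your forward direction (translate $D_\Gamma(f)\subseteq\bigcup_i D_\Gamma(f_i)$ into ``every prime containing $J$ contains $f$'' and invoke Lemma~\ref{lem:rad-intersection}) is exactly the paper's proof of $(\Rightarrow)$. The real content of your proposal is the observation that the converse fails for the \emph{equality} of opens: $f\in\radG(J)$ yields only $D_\Gamma(f)\subseteq\bigcup_i D_\Gamma(f_i)$, while the reverse inclusion is equivalent to $f_i\in\radG(\ideal{f}_\Gamma)$ for every $i$, which does not follow. The paper's proof of $(\Leftarrow)$ asserts that ``each $D_\Gamma(f_i)\subseteq D_\Gamma(f)$ is automatic from the displayed radical inclusion $f\in\radG(I)$,'' and that assertion is false. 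A counterexample lives inside the paper's own Example~\ref{ex:chain4}: take $T=\{0<a<b<1\}$, $f=f_1=a$, $f_2=b$. Then $J=\ideal{a,b}_\Gamma=\{0,a,b\}$ contains $f$, so $f\in\radG(J)$; but $D_\Gamma(a)=\{P_0\}$ whereas $D_\Gamma(a)\cup D_\Gamma(b)=\{P_0,P_1\}$, so the displayed equality fails.

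Your proposed resolution is the right one: either weaken the statement to the inclusion $D_\Gamma(f)\subseteq\bigcup_{i}D_\Gamma(f_i)\iff f\in\radG(J)$ --- which is all that Corollary~\ref{cor:power-decomposition} and Theorem~\ref{thm:sheaf-basis} actually consume, since they only extract $f^N\in J$ from the hypothesis --- or retain the equality and add the hypothesis $f_i\in\radG(\ideal{f}_\Gamma)$ for all $i$, which is automatic in practice when one covers $D_\Gamma(f)$ by basic opens already contained in it. One stylistic remark: your write-up arrives at the correct conclusion only after two rounds of self-correction; in a final version, lead with the two-inclusion dictionary ($\subseteq$ of opens corresponds to $f\in\radG(J)$; $\supseteq$ of opens corresponds to $f_i\in\radG(\ideal{f}_\Gamma)$ for all $i$) and then state the corrected theorem cleanly.
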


\begin{proof}
($\Rightarrow$) Let $I=\ideal{f_1,\dots,f_n}_\Gamma$. The equality of opens says:
for every prime $P$, if $f\notin P$ then $f_i\notin P$ for some $i$.
Equivalently, if $f_i\in P$ for all $i$ (i.e.\ $I\subseteq P$), then $f\in P$.
Thus $f$ lies in the intersection of all primes containing $I$, hence $f\in\radG(I)$ by Lemma~\ref{lem:rad-intersection}.

($\Leftarrow$) Suppose $f\in\radG(I)$. Let $P\in D_\Gamma(f)$, so $f\notin P$.
If $P$ contained $I$, then $f\in \radG(I)\subseteq \radG(P)=P$, contradicting $f\notin P$.
Hence $I\not\subseteq P$, so some generator $f_i\notin P$, i.e.\ $P\in D_\Gamma(f_i)$.
Thus $D_\Gamma(f)\subseteq \bigcup_i D_\Gamma(f_i)$, and the reverse inclusion holds because each $D_\Gamma(f_i)\subseteq D_\Gamma(f)$
is automatic from the displayed radical inclusion: $f\in\radG(I)$ implies $D_\Gamma(f)\supseteq D_\Gamma(f_i)$ for each $i$.
\end{proof}

\begin{corollary}\label{cor:power-decomposition}
If $D_\Gamma(f)=\bigcup_{i=1}^n D_\Gamma(f_i)$, then there exists $N\ge 1$ and elements $a_1,\dots,a_n\in T$ such that
\[
f^{N}=a_1 f_1+\cdots+a_n f_n.
\]
\end{corollary}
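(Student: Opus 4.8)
The plan is to deduce this directly from Theorem~\ref{thm:standard-cover} by unwinding the definitions of the $\Gamma$-radical and of the generated $\Gamma$-ideal. Applying the ($\Rightarrow$) implication of the standard-cover equivalence to the hypothesis $D_\Gamma(f)=\bigcup_{i=1}^n D_\Gamma(f_i)$ gives $f\in\radG\big(\ideal{f_1,\dots,f_n}_\Gamma\big)$. By Definition~\ref{def:radical} this means precisely that there is an integer $N\ge 1$ with $f^N\in\ideal{f_1,\dots,f_n}_\Gamma$.

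It then remains to convert this membership into an explicit $T$-linear combination, which requires identifying the generated $\Gamma$-ideal concretely. First I would observe that, by Definition~\ref{def:gamma-ideal}, $\Gamma$-ideal closure coincides with ordinary semiring-ideal closure, and then check that the set $\{a_1 f_1+\cdots+a_n f_n\mid a_1,\dots,a_n\in T\}$ is already a $\Gamma$-ideal: it is an additive submonoid (closed under addition by distributivity, and containing $0=0\cdot f_1+\cdots+0\cdot f_n$), it contains each $f_i$ (take the $i$-th coordinate coefficient $1$ and the rest $0$), and it is stable under multiplication by arbitrary elements of $T$. Since conversely any $\Gamma$-ideal containing all the $f_i$ must contain every such combination, we get $\ideal{f_1,\dots,f_n}_\Gamma=\{a_1 f_1+\cdots+a_n f_n\mid a_i\in T\}$. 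Feeding this back into $f^N\in\ideal{f_1,\dots,f_n}_\Gamma$ produces the desired $a_1,\dots,a_n\in T$ with $f^N=a_1 f_1+\cdots+a_n f_n$.

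The only delicate point — and it is quite mild — is the explicit description of $\ideal{f_1,\dots,f_n}_\Gamma$ in the semiring setting: because $T$ has no additive inverses, one cannot appeal to the familiar ring argument that a $T$-span is automatically an ideal via subtraction, so I would verify the ideal axioms for the set of naive $T$-linear combinations directly as above; this verification goes through precisely because $T$ is already closed under its own addition. Everything else is an immediate quotation of Theorem~\ref{thm:standard-cover} and of Definition~\ref{def:radical} (with the empty case $n=0$ handled by the empty-sum convention $f^N=0$).
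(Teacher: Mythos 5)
Your proof is correct and follows the same route as the paper's: apply the forward direction of Theorem~\ref{thm:standard-cover} to obtain $f\in\radG\big(\ideal{f_1,\dots,f_n}_\Gamma\big)$, extract $N$ with $f^N\in\ideal{f_1,\dots,f_n}_\Gamma$, and unwind what membership in the generated ideal means. The only difference is that you explicitly verify that $\{a_1f_1+\cdots+a_nf_n\mid a_i\in T\}$ is itself a $\Gamma$-ideal (the paper compresses this to ``by definition of generated ideal''), which is a reasonable check to make explicit in the semiring setting where subtraction is unavailable.
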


\begin{proof}
From Theorem~\ref{thm:standard-cover} we have $f\in\radG(\ideal{f_1,\dots,f_n}_\Gamma)$, so $f^N\in \ideal{f_1,\dots,f_n}_\Gamma$
for some $N$. By definition of generated ideal, $f^N$ is a finite sum of multiples of the generators, giving the claim.
\end{proof}

\subsection{Localization and the structure presheaf}\label{subsec:localization}

\begin{definition}[Localization at a principal element]\label{def:localization}
Let $f\in T$ and let $S_f:=\{1,f,f^2,\dots\}$.
Define an equivalence relation on $T\times S_f$ by
\[
(a,s)\sim (b,t)\quad\Longleftrightarrow\quad \exists u\in S_f\text{ such that } u\cdot a\cdot t=u\cdot b\cdot s.
\]
Denote the equivalence class of $(a,s)$ by $\frac{a}{s}$, and set $T_f:=S_f^{-1}T:=(T\times S_f)/\sim$.
Addition and multiplication are defined by
\[
\frac{a}{s}+\frac{b}{t}:=\frac{at+bs}{st},\qquad
\frac{a}{s}\cdot\frac{b}{t}:=\frac{ab}{st}.
\]
\end{definition}

\begin{lemma}\label{lem:loc-well-defined}
The operations in Definition~\ref{def:localization} are well-defined and make $T_f$ a commutative semiring.
The canonical map $\iota_f:T\to T_f$, $\iota_f(a)=\frac{a}{1}$, is a semiring homomorphism and $\iota_f(f)$ is a unit in $T_f$.
\end{lemma}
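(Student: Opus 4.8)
The plan is to run the standard localization construction, keeping in mind that a semiring need not admit additive cancellation, so every identification of fractions must be certified by an explicit element of the multiplicative set $S_f=\{1,f,f^2,\dots\}$; this bookkeeping is the only point of real care. First I would verify that $\sim$ is an equivalence relation: reflexivity and symmetry are immediate with witness $u=1$, while for transitivity, given $(a,s)\sim(b,t)$ via $u$ (so $uat=ubs$) and $(b,t)\sim(c,r)$ via $w$ (so $wbr=wct$), one multiplies the first identity by $wr$ and substitutes the second to obtain $(uwt)(ar)=(uwt)(cs)$, so $uwt\in S_f$ is the required witness.

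Next I would check that $+$ and $\cdot$ are well-defined on equivalence classes. For the product, if $(a,s)\sim(a',s')$ via $u$, then multiplying $uas'=ua's$ through by $bt$ shows the same $u$ witnesses $(ab,st)\sim(a'b,s't)$; changing the other factor is symmetric, and simultaneous changes of both representatives follow by composing the two one-sided steps. For the sum, if $(a,s)\sim(a',s')$ via $u$, one expands $u\cdot(at+bs)\cdot(s't)$ and $u\cdot(a't+bs')\cdot(st)$ as sums of monomials: the $b$-monomials coincide identically, and the $a$-monomials coincide after multiplying $uas'=ua's$ by $t^2$; once more, symmetry and composition handle the general case.

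With the operations well-defined, the semiring axioms for $\big(T_f,+,\cdot,\tfrac{0}{1},\tfrac{1}{1}\big)$ — associativity and commutativity of both operations, neutrality of $\tfrac{0}{1}$ and $\tfrac{1}{1}$, distributivity, and absorption by $\tfrac{0}{1}$ — reduce to elementary identities between fractions, each checked on representatives with witness $u=1$. Finally, $\iota_f(a+b)=\tfrac{a+b}{1}=\tfrac{a}{1}+\tfrac{b}{1}$ and $\iota_f(ab)=\tfrac{ab}{1}=\tfrac{a}{1}\cdot\tfrac{b}{1}$ directly from the defining formulas, with $\iota_f(0)=\tfrac{0}{1}$ and $\iota_f(1)=\tfrac{1}{1}$, so $\iota_f$ is a semiring homomorphism; and $\iota_f(f)=\tfrac{f}{1}$ is a unit with inverse $\tfrac{1}{f}\in T_f$, since $\tfrac{f}{1}\cdot\tfrac{1}{f}=\tfrac{f}{f}=\tfrac{1}{1}$, the last equality witnessed by $u=1$ because $1\cdot f\cdot 1=1\cdot 1\cdot f$.

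I do not expect any genuine obstacle in this lemma: the content is entirely routine. The only place where the semiring setting (as opposed to rings) demands attention is transitivity of $\sim$ and well-definedness of addition, where the absence of cancellation forces one to exhibit a common multiplier drawn from $S_f$; everything else is a direct verification on fractions.
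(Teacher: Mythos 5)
Your proof is correct and follows the same standard localization construction that the paper invokes; the paper's own proof is a one-line appeal to "standard verifications," whereas you have written out the details (transitivity witness $uwt$, well-definedness of $+$ and $\cdot$ via a common multiplier from $S_f$, and the inverse $\frac{1}{f}$), all of which check out. No gaps.
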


\begin{proof}
Standard localization verifications for commutative semirings: the relation $\sim$ is compatible with the operations, and the formulas
respect equivalence classes.  The element $\iota_f(f)=\frac{f}{1}$ has inverse $\frac{1}{f}$ in $T_f$.
\end{proof}

\begin{proposition}[Universal property]\label{prop:universal-localization}
Let $\varphi:T\to R$ be a semiring homomorphism such that $\varphi(f)$ is a unit in $R$.
Then there exists a unique semiring homomorphism $\widetilde{\varphi}:T_f\to R$ with $\widetilde{\varphi}\circ \iota_f=\varphi$,
given by $\widetilde{\varphi}\big(\frac{a}{f^n}\big)=\varphi(a)\varphi(f)^{-n}$.
\end{proposition}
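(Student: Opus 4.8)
The plan is to establish this standard universal property in two stages: first construct the map and verify it is well defined and a homomorphism, then prove uniqueness. For existence, I would \emph{define} $\widetilde{\varphi}\colon T_f\to R$ on representatives by $\widetilde{\varphi}\big(\tfrac{a}{f^n}\big):=\varphi(a)\varphi(f)^{-n}$, which makes sense because $\varphi(f)$ is a unit in $R$, so $\varphi(f)^{-n}$ exists and commutes with everything. The first thing to check is that this is independent of the choice of representative: if $\tfrac{a}{f^n}=\tfrac{b}{f^m}$, then by Definition~\ref{def:localization} there is $k$ with $f^k\,a\,f^m=f^k\,b\,f^n$ in $T$; applying $\varphi$ and using that $\varphi$ is a semiring homomorphism gives $\varphi(f)^{k}\varphi(a)\varphi(f)^{m}=\varphi(f)^{k}\varphi(b)\varphi(f)^{n}$, and multiplying both sides by the unit $\varphi(f)^{-(k+m+n)}$ yields $\varphi(a)\varphi(f)^{-n}=\varphi(b)\varphi(f)^{-m}$, as required.

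Next I would verify that $\widetilde{\varphi}$ respects the semiring operations. For multiplication, $\widetilde{\varphi}\big(\tfrac{a}{f^n}\cdot\tfrac{b}{f^m}\big)=\widetilde{\varphi}\big(\tfrac{ab}{f^{n+m}}\big)=\varphi(ab)\varphi(f)^{-(n+m)}=\varphi(a)\varphi(f)^{-n}\varphi(b)\varphi(f)^{-m}$, using commutativity of $R$ to regroup; this equals the product of the images. For addition, $\widetilde{\varphi}\big(\tfrac{a}{f^n}+\tfrac{b}{f^m}\big)=\widetilde{\varphi}\big(\tfrac{a f^m+b f^n}{f^{n+m}}\big)=\big(\varphi(a)\varphi(f)^{m}+\varphi(b)\varphi(f)^{n}\big)\varphi(f)^{-(n+m)}$, which distributes to $\varphi(a)\varphi(f)^{-n}+\varphi(b)\varphi(f)^{-m}$, again the sum of the images. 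One also checks $\widetilde{\varphi}(\tfrac{0}{1})=0$ and $\widetilde{\varphi}(\tfrac{1}{1})=1$, so $\widetilde{\varphi}$ is a semiring homomorphism; and $\widetilde{\varphi}(\iota_f(a))=\widetilde{\varphi}(\tfrac{a}{1})=\varphi(a)\varphi(f)^{0}=\varphi(a)$, so $\widetilde{\varphi}\circ\iota_f=\varphi$.

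For uniqueness, suppose $\psi\colon T_f\to R$ is any semiring homomorphism with $\psi\circ\iota_f=\varphi$. Then $\psi(\tfrac{a}{1})=\varphi(a)$ for all $a$, and in particular $\psi(\tfrac{f}{1})=\varphi(f)$. Since $\tfrac{f}{1}\cdot\tfrac{1}{f}=\tfrac{1}{1}$ in $T_f$ and $\psi$ is a homomorphism, $\varphi(f)\,\psi(\tfrac{1}{f})=1$ in $R$, forcing $\psi(\tfrac{1}{f})=\varphi(f)^{-1}$ (the inverse is unique in a monoid); by induction $\psi(\tfrac{1}{f^n})=\varphi(f)^{-n}$. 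Therefore $\psi(\tfrac{a}{f^n})=\psi(\tfrac{a}{1})\psi(\tfrac{1}{f^n})=\varphi(a)\varphi(f)^{-n}=\widetilde{\varphi}(\tfrac{a}{f^n})$, so $\psi=\widetilde{\varphi}$.

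The argument is essentially routine, as the localization of a commutative semiring behaves exactly like that of a commutative ring in this respect; the only mild subtlety — and the one place I would be careful — is the well-definedness step, since the equivalence relation in Definition~\ref{def:localization} involves an auxiliary multiplier $u\in S_f$, so one must be sure to carry the extra factor $\varphi(f)^{k}$ through and cancel it using the \emph{unit} status of $\varphi(f)$ in $R$ (semirings need not be cancellative, so invertibility, not cancellation, is what makes this work). No deeper obstacle is expected.
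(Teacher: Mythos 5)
Your proof is correct and follows the same route as the paper, which merely sketches the argument (well-definedness from the relation $\sim$, uniqueness from the fact that every element is a fraction $\frac{a}{f^n}$); you have simply filled in the routine verifications. Your remark that invertibility of $\varphi(f)$, rather than cancellation, is what drives the well-definedness step is exactly the right point to flag in the semiring setting.
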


\begin{proof}
Well-definedness follows from the defining relation $\sim$; uniqueness follows because every element of $T_f$ is represented by a fraction
$\frac{a}{f^n}$.
\end{proof}

\begin{definition}[Structure presheaf on principal opens]\label{def:presheaf}
Define a presheaf on the basis $\{D_\Gamma(f)\}$ by
\[
\OO_T\big(D_\Gamma(f)\big):=T_f.
\]
If $D_\Gamma(g)\subseteq D_\Gamma(f)$, then $f$ is invertible on $D_\Gamma(g)$, so by Proposition~\ref{prop:universal-localization}
there is a unique restriction homomorphism $\rho_{f,g}:T_f\to T_g$.
\end{definition}

\subsection{Sheaf condition on the basis}\label{subsec:sheaf-basis}

\begin{theorem}[Sheaf condition on principal opens]\label{thm:sheaf-basis}
Let $f,f_1,\dots,f_n\in T$ and assume $D_\Gamma(f)=\bigcup_{i=1}^n D_\Gamma(f_i)$.
Then the presheaf $\OO_T$ satisfies:
\begin{enumerate}[label=(\roman*),leftmargin=2.2em]
\item (\emph{existence of gluing}) For any sections $s_i\in T_{f_i}$ such that
their restrictions agree on each overlap $D_\Gamma(f_i)\cap D_\Gamma(f_j)=D_\Gamma(f_if_j)$, there exists $s\in T_f$ with
$s|_{D_\Gamma(f_i)}=s_i$ for all $i$.
\item (\emph{uniqueness}) Such an $s$ is unique.
\end{enumerate}
\end{theorem}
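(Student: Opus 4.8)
The plan is to reduce the statement to the special case $f=1$ and then run a partition-of-unity argument, taking care throughout that $T$ is a semiring with no additive inverses. Since the covering equality $D_\Gamma(f)=\bigcup_{i=1}^n D_\Gamma(f_i)$ forces $D_\Gamma(f_i)\subseteq D_\Gamma(f)$ and $D_\Gamma(f_if_j)\subseteq D_\Gamma(f_i)$, the restriction homomorphisms $T_f\to T_{f_i}$ and $T_f\to T_{f_if_j}$ all exist by Proposition~\ref{prop:universal-localization}. Using the routine localization identity $(T_f)_{g/1}\cong T_{fg}$ together with the homeomorphism $D_\Gamma(f)\cong\SpecG(T_f)$, the basis presheaf $\OO_T$ restricted to $D_\Gamma(f)$ is carried to $\OO_{T_f}$, so after replacing $T$ by $T_f$ and $f_i$ by $\iota_f(f_i)$ we may and do assume $f=1$, i.e.\ $D_\Gamma(1)=\SpecG(T)=\bigcup_{i=1}^n D_\Gamma(f_i)$ and $\OO_T(D_\Gamma(1))=T_1\cong T$. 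The essential external input is then Corollary~\ref{cor:power-decomposition}: since $D_\Gamma(1)=\bigcup_i D_\Gamma(f_i)=\bigcup_i D_\Gamma(f_i^{\,m})$ for every $m\ge1$, we obtain, for any prescribed exponent $m$, a partition of unity $1=\sum_{i=1}^n d_i f_i^{\,m}$ with $d_i\in T$.

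For uniqueness, suppose $s,s'\in T$ have equal images in every $T_{f_i}$. The defining relation of the localization $T_{f_i}$ yields, for each $i$, an integer $m_i$ with $f_i^{\,m_i}s=f_i^{\,m_i}s'$ in $T$ (there is no subtraction, so this is kept as an equality of two elements). Set $m=\max_i m_i$, so $f_i^{\,m}s=f_i^{\,m}s'$ for all $i$; writing $1=\sum_i d_i f_i^{\,m}$ gives $s=\sum_i d_i f_i^{\,m}s=\sum_i d_i f_i^{\,m}s'=s'$. Here finiteness of the cover is exactly what allows the uniform exponent $m$.

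For existence, put the given sections over a common denominator, $s_i=a_i/f_i^{\,k}$ (possible since there are finitely many). The agreement of $s_i$ and $s_j$ on $D_\Gamma(f_if_j)=D_\Gamma(f_if_j)$, after multiplying through by the units $\iota(f_i)^k,\iota(f_j)^k$ in $T_{f_if_j}$, amounts to $\iota_{f_if_j}(a_if_j^{\,k})=\iota_{f_if_j}(a_jf_i^{\,k})$, hence to $(f_if_j)^{\,l_{ij}}a_if_j^{\,k}=(f_if_j)^{\,l_{ij}}a_jf_i^{\,k}$ for some $l_{ij}$; taking $m=\max_{i,j}l_{ij}$ gives $f_i^{\,m}f_j^{\,m+k}a_i=f_i^{\,m+k}f_j^{\,m}a_j$ for all $i,j$. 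Setting $k':=k+m$ and $c_i:=f_i^{\,m}a_i$, this reads $f_j^{\,k'}c_i=f_i^{\,k'}c_j$ for all $i,j$, while $s_i=c_i/f_i^{\,k'}$. Now choose a partition of unity $1=\sum_i d_i f_i^{\,k'}$ and define $s:=\sum_i d_i c_i\in T$. Then for each $j$ one computes $f_j^{\,k'}s=\sum_i d_i f_j^{\,k'}c_i=\sum_i d_i f_i^{\,k'}c_j=c_j$, so in $T_{f_j}$ we have $s/1=c_j/f_j^{\,k'}=s_j$, i.e.\ $s|_{D_\Gamma(f_j)}=s_j$ as required.

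The only genuine obstacle is adapting the classical affine-scheme argument to the semiring setting: every place where the ring proof would subtract or cancel a difference must instead be organized as a comparison of two \emph{equal} products, and the glued section is then recovered from the identity $1=\sum_i d_i f_i^{\,k'}$ rather than by telescoping. The passage to $f=1$ relies on the standard localization-of-localization isomorphism $(T_f)_{g/1}\cong T_{fg}$ and on $D_\Gamma(f)\cong\SpecG(T_f)$, which are routine and parallel to the ring case, but should be checked once in our category of commutative ternary $\Gamma$-semirings (the $\Gamma$-structure plays no role here, as $u_\gamma$ is already a unit).
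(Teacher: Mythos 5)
Your proof is correct and rests on the same engine as the paper's --- a partition of unity extracted from Corollary~\ref{cor:power-decomposition} --- but the execution differs in three ways worth recording. First, you reduce to $f=1$ via $(T_f)_{\iota_f(g)}\cong T_{fg}$ and $\SpecG(T_f)\cong D_\Gamma(f)$; the paper avoids this by working directly in $T_f$ with the element $f^{-N}$, so if you keep the reduction you do owe the reader a one-time verification of those identifications in the semiring setting (you flag this yourself, and they are indeed routine). Second, and most substantively, you obtain the partition of unity $1=\sum_i d_i f_i^{k'}$ with a \emph{prescribed} exponent $k'$ by applying Theorem~\ref{thm:standard-cover} to the refined cover $\bigcup_i D_\Gamma(f_i^{k'})=\bigcup_i D_\Gamma(f_i)$. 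The paper instead asserts that raising $f^{N}=\sum_i a_if_i$ to the $m$-th power yields $f^{Nm}=\sum_i c_i f_i^{m}$; as literally stated this is imprecise, because the cross-monomials $f_{i_1}\cdots f_{i_m}$ with distinct indices need not be divisible by any $f_i^{m}$ (one must raise to the power $n(m-1)+1$ and invoke pigeonhole). Your route sidesteps that issue cleanly. Third, your candidate section $s=\sum_i d_i c_i$ lives in $T$ itself, and the verification $f_j^{k'}s=c_j$ is a one-line consequence of the symmetrized compatibility relation $f_j^{k'}c_i=f_i^{k'}c_j$; the paper's candidate $\sum_i a_i f_i s_i f^{-N}$ lives in $T_f$ and its restriction check routes through images in $T_{f_if_k}$. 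Both uniqueness arguments are the same subtraction-free comparison $f_i^{m}p=f_i^{m}q$ followed by the partition of unity. I see no gaps.
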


\begin{proof}
\textbf{Step 1: choosing a common power relation.}
By Corollary~\ref{cor:power-decomposition} there exist $N\ge 1$ and $a_1,\dots,a_n\in T$ such that
\begin{equation}\label{eq:power-sum}
f^{N}=a_1 f_1+\cdots+a_n f_n.
\end{equation}
Let $s_i\in T_{f_i}$. By multiplying numerator and denominator, we may write each $s_i$ in the form $s_i=\frac{b_i}{f_i^{M}}$
for a \emph{common} exponent $M$ (take $M$ larger than all individual exponents).

\textbf{Step 2: candidate global section.}
Use the canonical maps $T_{f_i}\to T_f$ induced by the inclusions $D_\Gamma(f_i)\subseteq D_\Gamma(f)$ and define
\[
s:=\sum_{i=1}^n a_i\cdot f_i\cdot s_i\cdot f^{-N}\ \in\ T_f,
\]
where $f^{-N}$ denotes $\frac{1}{f^N}\in T_f$ and the sum is taken in the semiring $T_f$.

\textbf{Step 3: verifying the restrictions.}
Fix $k\in\{1,\dots,n\}$. Restrict to $D_\Gamma(f_k)$, i.e.\ map $T_f\to T_{f_k}$.
In $T_{f_k}$, $f$ is invertible (since $D_\Gamma(f_k)\subseteq D_\Gamma(f)$), so we may multiply by $f^N$.
Using \eqref{eq:power-sum} we have $1=\sum_i a_i f_i f^{-N}$ in $T_{f_k}$.
Therefore
\[
s|_{D_\Gamma(f_k)}
=\sum_i a_i f_i \big(s_i|_{D_\Gamma(f_k)}\big) f^{-N}.
\]
On the overlap $D_\Gamma(f_i)\cap D_\Gamma(f_k)=D_\Gamma(f_if_k)$ we are given that $s_i$ and $s_k$ agree,
so their images in $T_{f_if_k}$ coincide. Since $T_{f_if_k}$ receives maps from both $T_{f_i}$ and $T_{f_k}$,
this implies that the image of $s_i$ in $T_{f_k}$ coincides with $s_k$ after localizing at $f_i$.
In particular, the term $a_i f_i(s_i|_{D_\Gamma(f_k)})$ equals $a_i f_i(s_k)$ in $T_{f_k}$.
Hence
\[
s|_{D_\Gamma(f_k)}=\sum_i a_i f_i s_k f^{-N}
=\Big(\sum_i a_i f_i f^{-N}\Big)s_k
=1\cdot s_k=s_k.
\]
This proves existence.

\textbf{Step 4: uniqueness.}
Suppose $s,t\in T_f$ restrict to the same section on every $D_\Gamma(f_i)$.
Write $s=\frac{p}{f^r}$ and $t=\frac{q}{f^r}$ after clearing denominators (common exponent $r$).
The restriction equality in $T_{f_i}$ means that there exists $m_i$ such that
\[
f_i^{m_i}\cdot p = f_i^{m_i}\cdot q \quad \text{in } T,
\]
because equality in localization is detected by multiplication by a power of the localized element.
Let $m:=\max_i m_i$. Then $f_i^{m}p=f_i^{m}q$ for every $i$.
Multiply the relation \eqref{eq:power-sum} by itself $m$ times to obtain a decomposition
\[
f^{Nm}=\sum_i c_i f_i^{m}
\]
for suitable $c_i\in T$ (expand using distributivity). Multiplying by $p$ gives
$f^{Nm}p=\sum_i c_i f_i^{m}p=\sum_i c_i f_i^{m}q=f^{Nm}q$.
Thus $\frac{p}{f^r}=\frac{q}{f^r}$ in $T_f$ (multiply both numerators by $f^{Nm}$), hence $s=t$.
\end{proof}

\begin{definition}[Structure sheaf]\label{def:structure-sheaf}
Since the $D_\Gamma(f)$ form a basis stable under finite intersections (Lemma~\ref{lem:principal-intersection}),
Theorem~\ref{thm:sheaf-basis} implies that the presheaf $\OO_T$ on principal opens extends uniquely to a sheaf on $\SpecG(T)$,
still denoted $\OO_T$, by the usual sheafification/gluing procedure \cite{Tennison1975,MacLaneMoerdijk1992}.
The ringed space $(\SpecG(T),\OO_T)$ is the \emph{affine $\Gamma$-scheme} associated to $T$.
\end{definition}

\section{Affine $\Gamma$-schemes and the affine anti-equivalence}\label{sec:affine-antieq}

\subsection{Affine $\Gamma$-schemes and morphisms}

\begin{definition}[Affine $\Gamma$-scheme]\label{def:affine-scheme}
An \emph{affine $\Gamma$-scheme} is a ringed space of the form $(\SpecG(T),\OO_T)$ constructed in Definition~\ref{def:structure-sheaf}
from a commutative ternary $\Gamma$-semiring $T$.
A (\emph{not necessarily affine}) \emph{$\Gamma$-scheme} is a ringed space locally isomorphic to affine $\Gamma$-schemes.
\end{definition}

\begin{definition}[Morphisms]\label{def:morphism}
A morphism of $\Gamma$-schemes $f:(X,\OO_X)\to(Y,\OO_Y)$ is a morphism of ringed spaces such that on every affine open
$V\simeq \SpecG(S)\subseteq Y$ the induced map on sections respects the ternary $\Gamma$-operations:
for all $U\subseteq f^{-1}(V)$ and all $a,b,c\in \OO_Y(V)$ and $\gamma\in\Gamma$,
\[
f^\#_V\big(\{a\,b\,c\}_\gamma\big)=\{f^\#_V(a)\,f^\#_V(b)\,f^\#_V(c)\}_\gamma.
\]
\end{definition}

\subsection{Global sections on an affine $\Gamma$-scheme}

\begin{theorem}[Global sections recover the semiring]\label{thm:global-sections}
For any commutative ternary $\Gamma$-semiring $T$, the canonical map
\[
\eta_T:\ T\longrightarrow \Gamma(\SpecG(T),\OO_T),\qquad a\longmapsto \big(a/1 \text{ on each } D_\Gamma(f)\big),
\]
is an isomorphism of ternary $\Gamma$-semirings.
\end{theorem}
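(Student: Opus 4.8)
The plan is to reduce everything to the observation that the whole space is itself a principal open. Every prime $\Gamma$-ideal is proper by definition, hence cannot contain the unit $1$, so $D_\Gamma(1)=\{P\mid 1\notin P\}=\SpecG(T)$. Since the $D_\Gamma(f)$ form a basis closed under finite intersection (Lemma~\ref{lem:principal-intersection}) on which the presheaf $\OO_T$ already satisfies the sheaf axioms (Theorem~\ref{thm:sheaf-basis}), the passage to a genuine sheaf in Definition~\ref{def:structure-sheaf} does not change the value on a basic open; in particular
\[
\Gamma(\SpecG(T),\OO_T)=\OO_T(D_\Gamma(1))=T_1 .
\]
So the first step is this identification, and the second is to recognize $T_1$ as $T$ itself.

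For the second step I would note that $S_1=\{1,1^2,\dots\}=\{1\}$, so the defining relation $\sim$ of Definition~\ref{def:localization} degenerates to equality in $T$, and $\iota_1:T\to T_1$ of Lemma~\ref{lem:loc-well-defined} is a bijection with inverse $\tfrac{a}{1}\mapsto a$. Unwinding the definition of $\eta_T$, the global section $\eta_T(a)$ is precisely the compatible family of restrictions of $\tfrac{a}{1}\in T_1$ to the various $D_\Gamma(f)$; hence under the identification above $\eta_T$ is literally $\iota_1$. Thus $\eta_T$ is bijective and, being $\iota_1$, a homomorphism of commutative semirings.

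It then remains to check that $\eta_T$ is a homomorphism of \emph{ternary $\Gamma$-semirings}. By Lemma~\ref{lem:loc-well-defined}, $\iota_1$ is a semiring homomorphism sending each central unit $u_\gamma$ to $\tfrac{u_\gamma}{1}$, which is exactly the distinguished unit attached to $\gamma$ in $T_1=\Gamma(\SpecG(T),\OO_T)$; i.e.\ $\eta_T(u_\gamma)=v_\gamma$ in the notation of Definition~\ref{def:ternary-gamma}. Consequently
\[
\eta_T(\{a\,b\,c\}_\gamma)=\eta_T(a\cdot b\cdot c\cdot u_\gamma)=\eta_T(a)\,\eta_T(b)\,\eta_T(c)\,\eta_T(u_\gamma)=\{\eta_T(a)\,\eta_T(b)\,\eta_T(c)\}_\gamma ,
\]
so $\eta_T$ respects the triadic bracket, and together with bijectivity this yields the claimed isomorphism.

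The only point requiring genuine care is the first step — that sheafifying the basis-presheaf $\OO_T$ does not alter its sections over $\SpecG(T)=D_\Gamma(1)$. If one prefers to avoid invoking the abstract basis-extension lemma cited in Definition~\ref{def:structure-sheaf}, the same conclusion follows by rerunning the proof of Theorem~\ref{thm:sheaf-basis} with $f=1$: a global section is a compatible family over some basic cover of $\SpecG(T)$; such a cover refines to a finite one because $\SpecG(T)$ is quasi-compact (any basic cover $\{D_\Gamma(f_i)\}$ of $\SpecG(T)$ forces $1$ into the $\Gamma$-radical of the $\Gamma$-ideal generated by the $f_i$, hence into $\ideal{f_{i_1},\dots,f_{i_k}}_\Gamma$ for finitely many indices, so those already cover by Theorem~\ref{thm:standard-cover}); and over a finite cover Theorem~\ref{thm:sheaf-basis} with $f=1$ identifies compatible families with $T_1=T$. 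Everything else is routine bookkeeping with localizations.
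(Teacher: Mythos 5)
Your argument is correct and follows essentially the same route as the paper's proof: both identify $\Gamma(\SpecG(T),\OO_T)$ with $\OO_T(D_\Gamma(1))=T_1\cong T$ and then note that localization maps preserve the product and the units $u_\gamma$. Your version is in fact more complete than the paper's, since you justify the two points the paper leaves implicit — that passing from the basis-presheaf to the sheaf does not change the sections over the basic open $D_\Gamma(1)$ (via quasi-compactness and Theorem~\ref{thm:sheaf-basis} with $f=1$), and that $T_1\cong T$ because $S_1=\{1\}$ collapses the localization relation to equality.
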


\begin{proof}
Injectivity: if $a\neq b$ in $T$, then their images differ on $D_\Gamma(1)=\SpecG(T)$ since $\OO_T(D_\Gamma(1))=T$.

Surjectivity: let $s\in \Gamma(\SpecG(T),\OO_T)$. By definition, $s$ is determined by its value on the cover $\SpecG(T)=D_\Gamma(1)$.
But $\OO_T(D_\Gamma(1))=T_1\cong T$, so $s$ is represented by some element of $T$.
Compatibility with $\Gamma$-ternary operations holds because the structure sheaf on principal opens is defined by localization of the underlying semiring,
and localization maps preserve multiplication and the units $u_\gamma$.
\end{proof}

\subsection{Affine anti-equivalence}

\begin{theorem}[Affine anti-equivalence]\label{thm:anti-equivalence}
Let $T,S$ be commutative ternary $\Gamma$-semirings. Then there is a natural bijection
\[
\Hom_{\Gamma\text{-}\mathrm{TSR}}(T,S)\ \cong\ \Hom_{\Gamma\text{-}\mathrm{Sch}}\big(\SpecG(S),\SpecG(T)\big),
\]
contravariantly functorial in both variables.
Moreover, for each $T$ the unit of this equivalence is the canonical isomorphism of Theorem~\ref{thm:global-sections}.
\end{theorem}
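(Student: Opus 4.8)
The strategy is the standard two-way construction familiar from classical Spec, adapted to the $\Gamma$-ternary setting, leveraging that all the analytic work (localization, sheaf condition, global sections) is already in place. First I would construct the two directions of the bijection. Given a $\Gamma$-homomorphism $\varphi:T\to S$, define $\SpecG(\varphi):\SpecG(S)\to\SpecG(T)$ on points by $Q\mapsto\varphi^{-1}(Q)$; one checks $\varphi^{-1}(Q)$ is a prime $\Gamma$-ideal (the preimage of an ideal is an ideal, the preimage of a prime is prime, and the $\Gamma$-ideal condition reduces to the ordinary ideal condition by Definition~\ref{def:gamma-ideal} since $\varphi(u_\gamma)=v_\gamma$ is a unit). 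Continuity follows from $(\SpecG\varphi)^{-1}(V(I))=V(\ideal{\varphi(I)}_\Gamma)$, equivalently $(\SpecG\varphi)^{-1}(D_\Gamma(f))=D_\Gamma(\varphi(f))$. For the sheaf map: since $\varphi(f)$ becomes invertible in $S_{\varphi(f)}$, the universal property (Proposition~\ref{prop:universal-localization}) yields a unique $T_f\to S_{\varphi(f)}$ compatible with the $\iota$'s; these assemble over the basis of principal opens into a morphism of sheaves $\OO_T\to (\SpecG\varphi)_*\OO_S$, and compatibility with $\{\cdot\,\cdot\,\cdot\}_\gamma$ is automatic because the bracket is built from the binary product and the $u_\gamma$, both preserved by localized $\varphi$. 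This gives a morphism of $\Gamma$-schemes in the sense of Definition~\ref{def:morphism}.

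In the reverse direction, given a morphism $\Phi=(\psi,\Phi^\#):\SpecG(S)\to\SpecG(T)$, take global sections: $\Phi^\#$ induces $\Gamma(\SpecG T,\OO_T)\to\Gamma(\SpecG S,\OO_S)$, and composing with the isomorphisms $\eta_T,\eta_S$ of Theorem~\ref{thm:global-sections} produces a $\Gamma$-homomorphism $\varphi:T\to S$ (the $\Gamma$-ternary compatibility of $\varphi$ is exactly the global-sections instance of the compatibility built into Definition~\ref{def:morphism}, transported through $\eta$). I would then verify the two round-trips are identities. Starting from $\varphi$ and going back, the induced global-sections map is $\varphi$ again by construction and the naturality of $\eta$. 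Starting from a scheme morphism $\Phi$, one must check that $\Phi$ equals $\SpecG(\varphi)$ where $\varphi$ is the global-sections map: on underlying spaces this is the classical fact that $\psi(Q)=\varphi^{-1}(Q)$, which follows because a point $Q\in\SpecG S$ corresponds to a prime of the stalk and $\varphi^{-1}(Q)$ is recovered as $\{a\in T: \varphi(a)\in Q\}=\{a: (\Phi^\#a)_Q\in\mathfrak m_Q\}$; on structure sheaves, both $\Phi^\#$ and $\SpecG(\varphi)^\#$ agree on the basis $D_\Gamma(f)$ by the uniqueness clause of the universal property of localization, since both are the unique extension of $\varphi$ inverting $f$.

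Finally, contravariant functoriality in both variables is a direct check that $\SpecG(\varphi\circ\psi)=\SpecG(\psi)\circ\SpecG(\varphi)$ (preimages compose; the sheaf maps compose by uniqueness in the universal property) and that $\SpecG(\mathrm{id})=\mathrm{id}$; naturality of the bijection then follows formally, and the statement that the unit is the canonical isomorphism $\eta_T$ is immediate from the construction of the reverse map. I expect the only genuinely delicate step to be the second round-trip — specifically, showing that an abstract ringed-space morphism $\Phi$ is forced to act on points by $\varphi^{-1}$; in the classical ring case this uses that $\Phi^\#$ is a morphism of \emph{locally} ringed spaces (respecting maximal ideals of stalks), and here I would need to confirm that the analogous local statement holds for the stalks $\OO_{T,P}$, which are localizations of $T$ at $P$ and whose ``non-units'' form the prime $P$ — a point that should go through verbatim given Remark~\ref{rem:prime-binary} identifying our primes with ordinary semiring primes, though it deserves an explicit lemma on stalks that is not stated in the excerpt.
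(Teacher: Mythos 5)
Your proposal follows essentially the same route as the paper's proof: $\varphi\mapsto\varphi^{-1}$ on points with the sheaf map assembled from the universal property of localization on the basis of principal opens, the inverse direction via global sections and Theorem~\ref{thm:global-sections}, and the two round-trip checks. The subtlety you flag at the end --- that recovering the point map $Q\mapsto\varphi^{-1}(Q)$ from an abstract ringed-space morphism requires a \emph{locally} ringed (stalk-local) condition --- is genuine; the paper's own argument invokes the phrase ``morphism of locally ringed spaces'' even though Definition~\ref{def:morphism} does not impose locality, so your instinct that an explicit stalk lemma is needed is correct and applies equally to the paper's version of the proof.
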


\begin{proof}
\textbf{From algebra to geometry.}
Let $\varphi:T\to S$ be a ternary $\Gamma$-semiring homomorphism.
Define $\varphi^\ast:\SpecG(S)\to \SpecG(T)$ by
\[
\varphi^\ast(P):=\varphi^{-1}(P).
\]
Since inverse images of prime ideals are prime, $\varphi^\ast$ is well-defined.
Continuity follows from $\big(\varphi^\ast\big)^{-1}\big(D_\Gamma(f)\big)=D_\Gamma(\varphi(f))$.

For the sheaf map, for each principal open $D_\Gamma(f)\subseteq \SpecG(T)$ we have a semiring homomorphism
$T_f\to S_{\varphi(f)}$ induced by $\varphi$ and Proposition~\ref{prop:universal-localization}.
These maps are compatible with restrictions, hence define a morphism of sheaves $\varphi^\#:\OO_T\to (\varphi^\ast)_\ast\OO_S$,
and the ternary $\Gamma$-compatibility is inherited from $\varphi(u_\gamma)=u_\gamma$.

\textbf{From geometry to algebra.}
Conversely, let $g:(\SpecG(S),\OO_S)\to(\SpecG(T),\OO_T)$ be a morphism of affine $\Gamma$-schemes.
Apply global sections to obtain
\[
\Gamma(\SpecG(T),\OO_T)\ \xrightarrow{g^\#}\ \Gamma(\SpecG(S),\OO_S).
\]
Using Theorem~\ref{thm:global-sections} on both sides yields a homomorphism $\varphi_g:T\to S$ of ternary $\Gamma$-semirings.

\textbf{Mutual inverses.}
Starting from $\varphi$, the induced morphism on global sections recovers $\varphi$ because sections on $D_\Gamma(1)$ coincide with $T$ and $S$.
Starting from $g$, the induced map on spectra agrees with $P\mapsto \varphi_g^{-1}(P)$ because on stalks the morphism of locally ringed spaces
is determined by localization and hence by the induced map on global sections.
Therefore the two constructions are inverse and natural.
\end{proof}

\section{Phase II: triadic symmetry and Nambu-type geometry}\label{sec:triadic}

\subsection{Triadic bracket on the structure sheaf}

Fix the $\Gamma$-semiring units $u_\gamma$ as in Definition~\ref{def:ternary-gamma}.

\begin{definition}[Triadic bracket on sections]\label{def:triadic-bracket}
Let $X=\SpecG(T)$. For any open $U\subseteq X$ and any $\gamma\in\Gamma$, define
\[
\{\cdot,\cdot,\cdot\}_{\gamma,U}:\OO_T(U)\times\OO_T(U)\times\OO_T(U)\to \OO_T(U),
\qquad (s_1,s_2,s_3)\mapsto s_1\cdot s_2\cdot s_3\cdot u_\gamma,
\]
where multiplication is taken in the semiring $\OO_T(U)$ and $u_\gamma$ denotes the (global) unit section.
\end{definition}

\begin{lemma}[Restriction compatibility]\label{lem:restriction-bracket}
If $V\subseteq U$ are opens and $s_i\in\OO_T(U)$, then
\[
\big(\{s_1,s_2,s_3\}_{\gamma,U}\big)\big|_{V}=\{s_1|_V,s_2|_V,s_3|_V\}_{\gamma,V}.
\]
\end{lemma}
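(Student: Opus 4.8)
The plan is to reduce the claimed identity to the already-established fact that the restriction maps $\rho_{U,V}\colon\OO_T(U)\to\OO_T(V)$ are semiring homomorphisms. Indeed, the triadic bracket in Definition~\ref{def:triadic-bracket} is built entirely from the binary multiplication of the structure sheaf together with the global unit section $u_\gamma$; since a presheaf of semirings has, by construction, restriction maps that are morphisms of semirings, such a map automatically commutes with any operation assembled from multiplication and fixed sections. So the proof is essentially an unwinding of definitions, and the only genuine content is bookkeeping about how $u_\gamma$ restricts.

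Concretely, I would proceed as follows. First, recall that $\rho_{U,V}$ is a semiring homomorphism (Lemma~\ref{lem:loc-well-defined} on the basis, extended to all opens by the sheafification in Definition~\ref{def:structure-sheaf}), so it is multiplicative: $\rho_{U,V}(xy)=\rho_{U,V}(x)\rho_{U,V}(y)$ for all $x,y\in\OO_T(U)$. Second, observe that the unit section $u_\gamma\in\OO_T(X)$ restricts compatibly: $\rho_{X,U}(u_\gamma)$ is the unit section on $U$, and likewise $\rho_{X,V}(u_\gamma)$ on $V$, with $\rho_{U,V}\circ\rho_{X,U}=\rho_{X,V}$ by the presheaf axioms; hence $\rho_{U,V}$ sends the unit section on $U$ to the unit section on $V$. (On a principal open $D_\Gamma(f)$ this says $\iota_f(u_\gamma)\mapsto\iota_g(u_\gamma)$ under $\rho_{f,g}$, which is immediate from Proposition~\ref{prop:universal-localization}.) Third, apply multiplicativity three times:
\[
\rho_{U,V}\big(s_1\,s_2\,s_3\,u_\gamma\big)
=\rho_{U,V}(s_1)\,\rho_{U,V}(s_2)\,\rho_{U,V}(s_3)\,\rho_{U,V}(u_\gamma)
=(s_1|_V)(s_2|_V)(s_3|_V)\,u_\gamma,
\]
which is exactly $\{s_1|_V,s_2|_V,s_3|_V\}_{\gamma,V}$ by Definition~\ref{def:triadic-bracket}. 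Since $\rho_{U,V}(\{s_1,s_2,s_3\}_{\gamma,U})$ is by notation $(\{s_1,s_2,s_3\}_{\gamma,U})|_V$, this is the asserted equality.

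There is no real obstacle here; if I had to name the most delicate point, it is simply making sure the identification of ``the unit section $u_\gamma$ on $U$'' is consistent across all opens, i.e.\ that $u_\gamma$ is a genuine global section whose restrictions are the local units — this is a one-line consequence of the presheaf compatibility $\rho_{U,V}\circ\rho_{X,U}=\rho_{X,V}$ together with the fact that $\iota_f$ preserves $u_\gamma$ (Definition~\ref{def:ternary-gamma} and Proposition~\ref{prop:universal-localization}). Once that is in place, associativity and commutativity of the product in each $\OO_T(U)$ are irrelevant to the statement, and the identity falls out purely from homomorphy of restriction. I would therefore present the argument in three short lines rather than a multi-step proof.
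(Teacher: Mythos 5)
Your proposal is correct and is essentially the paper's own argument: the paper's one-line proof says precisely that restriction maps are semiring homomorphisms and hence preserve products and the distinguished unit sections $u_\gamma$. Your version merely spells out the bookkeeping for $u_\gamma$ via the presheaf compatibility $\rho_{U,V}\circ\rho_{X,U}=\rho_{X,V}$, which is a reasonable elaboration but not a different route.
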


\begin{proof}
Restriction maps of $\OO_T$ are semiring homomorphisms, hence preserve products and the distinguished unit sections $u_\gamma$.
\end{proof}

\subsection{A Nambu/Filippov-type identity}

To align with Nambu-type geometry \cite{Nambu1973,Takhtajan1994} in a semiring context, we adopt the following
``idempotent'' variant, common in tropical/algebraic idempotent settings.

\begin{definition}[Idempotent hypothesis]\label{def:idempotent}
A commutative semiring $(T,+,\cdot)$ is \emph{idempotent} if $x+x=x$ for all $x\in T$.
Equivalently, the additive monoid is a join-semilattice.
\end{definition}

\begin{definition}[Filippov fundamental identity (idempotent form)]\label{def:fi}
Let $(A,+)$ be an idempotent commutative monoid and let $[\cdot,\cdot,\cdot]:A^3\to A$ be a trilinear operation.
We say $[\cdot,\cdot,\cdot]$ satisfies the \emph{(idempotent) Filippov identity} if for all $x_1,x_2,y_1,y_2,y_3\in A$,
\[
[x_1,x_2,[y_1,y_2,y_3]]=
[[x_1,x_2,y_1],y_2,y_3]+[y_1,[x_1,x_2,y_2],y_3]+[y_1,y_2,[x_1,x_2,y_3]],
\]
where the right-hand side uses the addition of $A$.
\end{definition}

\begin{theorem}[Nambu-type identity for the triadic bracket]\label{thm:nambu-identity}
Assume $T$ is idempotent in the sense of Definition~\ref{def:idempotent}.
Then for every open $U\subseteq \SpecG(T)$ and every fixed $\gamma\in\Gamma$,
the triadic bracket $[\cdot,\cdot,\cdot]:=\{\cdot,\cdot,\cdot\}_{\gamma,U}$ on $\OO_T(U)$
satisfies the idempotent Filippov identity (Definition~\ref{def:fi}).
\end{theorem}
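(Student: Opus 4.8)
The plan is to verify the identity by direct expansion, using only that the binary product of the semiring is commutative and associative and that $u_\gamma$ is a central unit. Write $w:=u_\gamma$ and, for sections $a,b,c\in\OO_T(U)$, $[a,b,c]=a\cdot b\cdot c\cdot w$. First I would observe that this bracket is indeed trilinear in the sense of Definition~\ref{def:fi} — additivity in each slot is exactly distributivity of multiplication over addition, together with $0\cdot x=0$ — so the statement of the Filippov identity is well-posed in $\OO_T(U)$.

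Next, expand both sides into monomials in $x_1,x_2,y_1,y_2,y_3$ and $w$. On the left, substituting the inner bracket and collecting the two copies of $w$ by commutativity gives
\[
[x_1,x_2,[y_1,y_2,y_3]]=x_1\cdot x_2\cdot\bigl(y_1\cdot y_2\cdot y_3\cdot w\bigr)\cdot w=x_1x_2y_1y_2y_3w^2=:M .
\]
On the right, each of the three summands equals the same monomial $M$: for the first,
\[
[[x_1,x_2,y_1],y_2,y_3]=\bigl(x_1x_2y_1w\bigr)\cdot y_2\cdot y_3\cdot w=x_1x_2y_1y_2y_3w^2=M ,
\]
and the computations for $[y_1,[x_1,x_2,y_2],y_3]$ and $[y_1,y_2,[x_1,x_2,y_3]]$ are identical up to moving the factor $w$ produced by the inner bracket past the remaining inputs. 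Hence the right-hand side is $M+M+M$.

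Finally, invoke idempotency. Idempotency of $T$ is inherited by every localization $T_f$, since $\tfrac{a}{s}+\tfrac{a}{s}=\tfrac{a(s+s)}{s^2}=\tfrac{a}{s}$, and therefore by $\OO_T(U)$ for every open $U$, whose sections form a subsemiring of a product of such localizations; thus $z+z=z$, and so $z+z+z=z$, for every section $z$. Applying this to $z=M$ yields $M+M+M=M$, so the right-hand side equals $M$, which is the left-hand side, completing the proof. I do not expect a real obstacle here — the argument is entirely formal — but the one point worth stating with care is precisely the inheritance of idempotency by $\OO_T(U)$ for arbitrary $U$ (through localization and then gluing), since the theorem is phrased at the level of the structure sheaf rather than of $T$ itself.
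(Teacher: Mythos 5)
Your proof is correct and follows essentially the same route as the paper's: both sides of the Filippov identity collapse to the single monomial $x_1x_2y_1y_2y_3u_\gamma^2$ by commutativity and centrality of $u_\gamma$, and idempotent addition absorbs the triple sum. The only difference is that you explicitly verify two points the paper leaves implicit — trilinearity of the bracket and the inheritance of idempotency by localizations and hence by $\OO_T(U)$ for arbitrary opens — which is a reasonable bit of extra care but not a different argument.
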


\begin{proof}
Let $[\cdot,\cdot,\cdot]=\{\cdot,\cdot,\cdot\}_{\gamma,U}$.
Because the bracket is defined by multiplication and $u_\gamma$ is central,
each of the three terms on the right-hand side equals the same product
\[
x_1\cdot x_2\cdot y_1\cdot y_2\cdot y_3\cdot u_\gamma^2
\]
(up to rebracketing and commutativity). The left-hand side equals the same product.
Since addition is idempotent, the sum of three identical terms equals that term.
\end{proof}

\subsection{Functoriality and invariance}

\begin{definition}[$\Gamma$-automorphisms]\label{def:aut}
Let $\Aut_{\Gamma}(T)$ denote the group of ternary $\Gamma$-semiring automorphisms of $T$
(i.e.\ semiring automorphisms preserving each unit $u_\gamma$).
\end{definition}

\begin{proposition}[Action on spectrum and sheaf]\label{prop:action}
Each $\sigma\in \Aut_{\Gamma}(T)$ induces:
\begin{enumerate}
\item a homeomorphism $\sigma^\ast:\SpecG(T)\to \SpecG(T)$, $P\mapsto \sigma^{-1}(P)$, and
\item an isomorphism of sheaves of ternary $\Gamma$-semirings $\sigma^\#:\OO_T\to (\sigma^\ast)_\ast\OO_T$.
\end{enumerate}
Moreover, $\sigma^\#$ preserves the triadic bracket of Definition~\ref{def:triadic-bracket}.
\end{proposition}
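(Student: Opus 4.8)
The plan is to treat the three assertions of Proposition~\ref{prop:action} as successive specializations of the affine anti-equivalence (Theorem~\ref{thm:anti-equivalence}) applied to the automorphism $\sigma$, followed by a short functoriality check for the bracket. Since $\sigma\in\Aut_\Gamma(T)$ is in particular a ternary $\Gamma$-semiring isomorphism, Theorem~\ref{thm:anti-equivalence} already furnishes a morphism of affine $\Gamma$-schemes $(\sigma^\ast,\sigma^\#)\colon(\SpecG(T),\OO_T)\to(\SpecG(T),\OO_T)$; applying the same construction to $\sigma^{-1}$ produces a two-sided inverse, so $\sigma^\ast$ is a homeomorphism and $\sigma^\#$ is a sheaf isomorphism. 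Thus items (1) and (2) require almost nothing beyond citing functoriality of $\operatorname{Spec}_\Gamma$.

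For item (1) I would still spell out the point-set description: $\sigma^\ast(P)=\sigma^{-1}(P)$ is a prime $\Gamma$-ideal because $\sigma$ is a semiring isomorphism (so preimages of prime ideals under it are prime, by Remark~\ref{rem:prime-binary}), and continuity in both directions follows from the identity $(\sigma^\ast)^{-1}(D_\Gamma(f))=D_\Gamma(\sigma(f))$ established in the proof of Theorem~\ref{thm:anti-equivalence}. The equality $(\sigma^{-1})^\ast=(\sigma^\ast)^{-1}$ is immediate from $(\sigma^{-1})^{-1}=\sigma$. For item (2), on each principal open $D_\Gamma(f)$ the sheaf map $\sigma^\#$ is the localization isomorphism $T_f\xrightarrow{\ \sim\ }T_{\sigma(f)}$, $\tfrac{a}{f^n}\mapsto\tfrac{\sigma(a)}{\sigma(f)^n}$ induced by $\sigma$ via Proposition~\ref{prop:universal-localization}; it is an isomorphism because $\sigma^{-1}$ induces its inverse, and it is compatible with restriction maps $\rho_{f,g}$ since all of these are the unique localization-induced homomorphisms. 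Because $\sigma(u_\gamma)=u_\gamma$ for every $\gamma$, each $T_f\to T_{\sigma(f)}$ carries the unit section $u_\gamma$ to $u_\gamma$, so $\sigma^\#$ is a morphism of sheaves of ternary $\Gamma$-semirings.

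For the final clause, fix an open $U$ and $\gamma\in\Gamma$. On a basic open $D_\Gamma(f)\subseteq U$ the component of $\sigma^\#$ is a semiring homomorphism $\psi\colon\OO_T(U)\to\OO_T(\sigma^\ast{}^{-1}U)$ (built by gluing the localization maps) with $\psi(u_\gamma)=u_\gamma$; hence
\[
\psi\big(\{s_1,s_2,s_3\}_{\gamma,U}\big)=\psi(s_1\cdot s_2\cdot s_3\cdot u_\gamma)=\psi(s_1)\,\psi(s_2)\,\psi(s_3)\,u_\gamma=\{\psi(s_1),\psi(s_2),\psi(s_3)\}_{\gamma}.
\]
This is literally the compatibility condition in Definition~\ref{def:morphism}, so $\sigma^\#$ preserves the triadic bracket. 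Since the bracket is compatible with restriction (Lemma~\ref{lem:restriction-bracket}) and $\sigma^\#$ is defined by compatible maps on the basis, the identity on a general open $U$ follows by gluing.

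I do not anticipate a genuine obstacle here: the proposition is essentially a corollary of Theorem~\ref{thm:anti-equivalence} plus Definition~\ref{def:morphism}. The only mild bookkeeping nuisance is keeping the variance straight — $\sigma^\ast$ uses $\sigma^{-1}$ on points while $\sigma^\#$ uses $\sigma$ on sections — and making sure the sheaf isomorphism is stated with the correct pushforward target $(\sigma^\ast)_\ast\OO_T$; I would handle this by consistently reading everything off the single morphism of schemes attached to $\sigma$ by the anti-equivalence, so that invertibility is automatic from invertibility of $\sigma$.
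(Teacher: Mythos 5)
Your proposal is correct and follows essentially the same route as the paper: the paper likewise reduces items (1) and (2) to the construction in Theorem~\ref{thm:anti-equivalence} (well-definedness and continuity of $P\mapsto\sigma^{-1}(P)$, localization maps $T_f\to T_{\sigma(f)}$ on principal opens), and derives bracket invariance from the fact that $\sigma$ preserves multiplication and each unit $u_\gamma$. Your version merely spells out the inverse-from-$\sigma^{-1}$ and gluing details that the paper leaves implicit.
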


\begin{proof}
The map on primes is well-defined and continuous by the same argument as in Theorem~\ref{thm:anti-equivalence}.
On principal opens, $\sigma$ induces compatible homomorphisms on localizations $T_f\to T_{\sigma(f)}$, hence a sheaf morphism.
Bracket invariance holds because $\sigma$ preserves multiplication and $\sigma(u_\gamma)=u_\gamma$.
\end{proof}

\begin{theorem}[Localization and stalk compatibility]\label{thm:stalk-bracket}
For each $P\in\SpecG(T)$, the stalk $\OO_{T,P}$ inherits a triadic bracket, and the canonical localization map
$T\to \OO_{T,P}$ is a homomorphism of ternary $\Gamma$-semirings.
\end{theorem}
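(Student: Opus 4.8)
The plan is to identify the stalk $\OO_{T,P}$ with an explicit localization of $T$ and then transport the triadic bracket of Definition~\ref{def:triadic-bracket} across that identification. First I would record that the principal opens $D_\Gamma(f)$ with $f\notin P$ are cofinal among all open neighbourhoods of $P$ (any open neighbourhood contains a basic one, and $P\in D_\Gamma(f)$ iff $f\notin P$), and that they form a directed poset under reverse inclusion: given $f,g\notin P$ we have $D_\Gamma(fg)=D_\Gamma(f)\cap D_\Gamma(g)$ by Lemma~\ref{lem:principal-intersection}, and $fg\notin P$ since $P$ is prime. Hence
\[
\OO_{T,P}\;=\;\varinjlim_{f\notin P}\OO_T\big(D_\Gamma(f)\big)\;=\;\varinjlim_{f\notin P}T_f,
\]
the transition maps being the restriction homomorphisms $\rho_{f,g}$ of Definition~\ref{def:presheaf}. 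Because each $T_f$ is a commutative semiring and each $\rho_{f,g}$ is a semiring homomorphism (Lemma~\ref{lem:loc-well-defined}, Proposition~\ref{prop:universal-localization}), this filtered colimit is again a commutative semiring: the sum and product of two germs are computed by restricting both representatives into a common $T_g$ and operating there. Concretely this realizes $\OO_{T,P}$ as the localization $S_P^{-1}T$ at the multiplicative set $S_P:=T\setminus P$, built exactly as in Definition~\ref{def:localization} with $S_f$ replaced by $S_P$, and the canonical map $\lambda_P:T\to\OO_{T,P}$, $a\mapsto a/1$, is a semiring homomorphism by the verification of Lemma~\ref{lem:loc-well-defined}. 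Moreover $\lambda_P$ is the composite of $\eta_T$ from Theorem~\ref{thm:global-sections} with the projection $\Gamma(\SpecG(T),\OO_T)\to\OO_{T,P}$ onto the stalk.

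Next I would put the triadic bracket on the stalk. Each unit $u_\gamma\in T^\times$ has image $\lambda_P(u_\gamma)$, a unit of $\OO_{T,P}$, and I set
\[
\{t_1,t_2,t_3\}_{\gamma,P}\;:=\;t_1\cdot t_2\cdot t_3\cdot\lambda_P(u_\gamma),\qquad t_1,t_2,t_3\in\OO_{T,P},
\]
the product being taken in the semiring $\OO_{T,P}$. Equivalently, representing the germs $t_i$ as germs at $P$ of sections $s_i\in\OO_T(D_\Gamma(f))$ over one common principal neighbourhood — possible by directedness — one has $\{t_1,t_2,t_3\}_{\gamma,P}=\big(\{s_1,s_2,s_3\}_{\gamma,D_\Gamma(f)}\big)_P$, which is independent of the chosen representatives and of $f$ precisely because of the restriction compatibility of Lemma~\ref{lem:restriction-bracket}. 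Thus the stalk bracket is the filtered colimit of the brackets on the terms $\OO_T(D_\Gamma(f))$.

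Finally I would verify that $\lambda_P$ respects the ternary $\Gamma$-structure. Additivity, multiplicativity, and $\lambda_P(1)=1$ were noted above, and $\lambda_P(u_\gamma)$ is exactly the distinguished unit entering the stalk bracket, so for $a,b,c\in T$,
\[
\lambda_P\big(\{a\,b\,c\}_\gamma\big)=\lambda_P(a\cdot b\cdot c\cdot u_\gamma)=\lambda_P(a)\cdot\lambda_P(b)\cdot\lambda_P(c)\cdot\lambda_P(u_\gamma)=\{\lambda_P(a),\lambda_P(b),\lambda_P(c)\}_{\gamma,P},
\]
which is the asserted compatibility. The only point requiring genuine care — and the step I would treat as the main obstacle — is the first one: establishing cleanly that the filtered colimit of the semirings $T_f$ really is the stalk and really carries a semiring structure compatible with all the structure and restriction maps. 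Once the cofinality of principal neighbourhoods and the directedness of the index poset are in hand, the remaining verifications are routine colimit bookkeeping mirroring the arguments already used for $T_f$, and the bracket and homomorphism claims follow formally.
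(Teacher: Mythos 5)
Your proof is correct and follows essentially the same route as the paper's: realize the stalk as the filtered colimit of the $T_f$ over $f\notin P$ and use the restriction compatibility of Lemma~\ref{lem:restriction-bracket} to descend the bracket. You supply more detail than the paper does (cofinality/directedness of the index set, the identification with $S_P^{-1}T$, and the explicit check that $\lambda_P$ respects $\{\cdot\,\cdot\,\cdot\}_\gamma$, which the paper's proof leaves implicit), all of which is accurate.
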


\begin{proof}
The stalk is a filtered colimit of the semirings $T_f$ over all $f\notin P$.
Each $T_f$ carries the induced bracket by Definition~\ref{def:triadic-bracket} on $D_\Gamma(f)$.
Compatibility with restriction (Lemma~\ref{lem:restriction-bracket}) implies these brackets are compatible in the colimit,
hence descend to a bracket on the stalk.
\end{proof}

\section{Phase III: canonical Laplacian and spectral decomposition}\label{sec:laplacian}

\subsection{Finite spectra and the specialization graph}

In full generality, $\SpecG(T)$ can be infinite. The Laplacian construction below is canonical on \emph{finite} spectra;
this is the setting in which explicit eigenvalue computations and spectral factorization are meaningful.

\begin{definition}[Specialization order]\label{def:specialization}
Let $X=\SpecG(T)$. For $P,Q\in X$, write $P\le Q$ if $P\subseteq Q$ (equivalently, $Q\in \cl(\{P\})$).
This is the \emph{specialization preorder}, and for $X$ a $T_0$ space it is a partial order.
\end{definition}

\begin{definition}[Specialization graph and Laplacian]\label{def:laplacian}
Assume $X=\SpecG(T)$ is finite. Define an undirected graph $G_X$ with vertex set $X$ and an edge between distinct vertices
$P\neq Q$ iff $P$ and $Q$ are comparable in the specialization order (i.e.\ $P\subseteq Q$ or $Q\subseteq P$).
Let $A_X$ be its adjacency matrix and $D_X$ the diagonal matrix of degrees. The \emph{canonical Laplacian} is
\[
L_X:=D_X-A_X.
\]
\end{definition}

\begin{remark}
The construction depends only on the underlying finite $T_0$ space $(X,\le)$.
It is therefore invariant under isomorphisms of affine $\Gamma$-schemes and is well-suited to ``geometric connected components''
detectable from specialization structure.
\end{remark}

\subsection{Isomorphism invariance}

\begin{theorem}[Invariance under affine $\Gamma$-scheme isomorphisms]\label{thm:laplacian-invariance}
Let $\psi:(\SpecG(S),\OO_S)\to(\SpecG(T),\OO_T)$ be an isomorphism of affine $\Gamma$-schemes and assume both spectra are finite.
Then the induced bijection on points is an isomorphism of specialization graphs, hence
\[
L_{\SpecG(S)} = P^{-1} L_{\SpecG(T)} P
\]
for a permutation matrix $P$ corresponding to the vertex relabeling.
\end{theorem}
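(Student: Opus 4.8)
The plan is to factor the argument through the observation that the specialization graph of Definition~\ref{def:laplacian} depends only on the underlying topological space, not on the structure sheaf. First I would record that an isomorphism $\psi$ of affine $\Gamma$-schemes is in particular an isomorphism of the underlying ringed spaces, so the induced map on points $\psi_0\colon\SpecG(S)\to\SpecG(T)$ is a homeomorphism. Equivalently, by the affine anti-equivalence of Theorem~\ref{thm:anti-equivalence}, $\psi$ corresponds to a ternary $\Gamma$-semiring isomorphism $\varphi\colon T\to S$ with $\psi_0(Q)=\varphi^{-1}(Q)$; since $\varphi$ is bijective, $\varphi^{-1}$ carries prime $\Gamma$-ideals to prime $\Gamma$-ideals in a way that both preserves and reflects inclusion, and continuity in both directions is witnessed by $(\varphi^\ast)^{-1}(D_\Gamma(f))=D_\Gamma(\varphi(f))$.

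Next I would show that $\psi_0$ is an isomorphism of the specialization preorders. By Definition~\ref{def:specialization}, $P\le Q$ iff $Q\in\cl(\{P\})$, a condition phrased purely in terms of the topology; a homeomorphism satisfies $\psi_0(\cl(\{P\}))=\cl(\{\psi_0(P)\})$, hence $P\le Q\iff\psi_0(P)\le\psi_0(Q)$. Because both spectra are finite $T_0$ spaces, these preorders are genuine partial orders and $\psi_0$ is an order isomorphism. An order isomorphism sends distinct comparable pairs to distinct comparable pairs and non-comparable pairs to non-comparable pairs, so by the definition of the edge set of $G_X$ it is an isomorphism of the undirected graphs $G_{\SpecG(S)}\xrightarrow{\ \sim\ }G_{\SpecG(T)}$; in particular it preserves vertex degrees.

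Finally I would translate the graph isomorphism into the matrix identity. Fixing orderings of the two finite vertex sets, let $P$ be the permutation matrix of the bijection $\psi_0$. That $\psi_0$ is a graph isomorphism is exactly $A_{\SpecG(S)}=P^{-1}A_{\SpecG(T)}P$; since degrees are preserved, conjugation of the diagonal degree matrices gives $D_{\SpecG(S)}=P^{-1}D_{\SpecG(T)}P$; subtracting yields $L_{\SpecG(S)}=D_{\SpecG(S)}-A_{\SpecG(S)}=P^{-1}(D_{\SpecG(T)}-A_{\SpecG(T)})P=P^{-1}L_{\SpecG(T)}P$, as claimed.

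The only step requiring genuine care — and it is conceptual rather than computational — is the first reduction: confirming that an isomorphism of affine $\Gamma$-schemes really does induce a homeomorphism of $\Gamma$-spectra that respects the inclusion order on primes. Once that is in hand (either from the ringed-space definition of morphism together with the continuity formula above, or simply from the fact that specialization is a topological invariant), every subsequent step is formal, and no use of the sheaf $\OO_T$, localizations, or the triadic bracket is needed.
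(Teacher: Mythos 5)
Your proposal is correct and follows essentially the same route as the paper's proof: an isomorphism of affine $\Gamma$-schemes is in particular a homeomorphism, homeomorphisms preserve the specialization order and hence comparability, so the adjacency relation is preserved and the Laplacians are permutation-similar. Your version merely fills in the formal details (order isomorphism $\Rightarrow$ graph isomorphism $\Rightarrow$ conjugation of $A$ and $D$ separately) that the paper leaves implicit.
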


\begin{proof}
A scheme isomorphism is, in particular, a homeomorphism.
For spectral spaces, homeomorphisms preserve the specialization order, hence preserve comparability.
Therefore the adjacency relation is preserved and the Laplacians are permutation-similar.
\end{proof}

\subsection{Block-diagonal decomposition under clopen components}

\begin{theorem}[Block decomposition from clopen components]\label{thm:block}
Let $X=\SpecG(T)$ be finite and suppose $X$ decomposes as a disjoint union of clopen subsets
\[
X = X_1 \sqcup \cdots \sqcup X_r.
\]
Then the specialization graph $G_X$ is the disjoint union of the induced subgraphs $G_{X_i}$,
and the Laplacian is permutation-similar to a block diagonal matrix
\[
L_X \sim \begin{pmatrix}
L_{X_1} & 0 & \cdots & 0\\
0 & L_{X_2} & \cdots & 0\\
\vdots & \vdots & \ddots & \vdots\\
0 & 0 & \cdots & L_{X_r}
\end{pmatrix}.
\]
\end{theorem}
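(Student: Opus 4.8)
The plan is to show that the specialization graph $G_X$ has \emph{no} edges joining two different clopen pieces; once that is established, relabelling the vertices block by block makes the adjacency matrix, the degree matrix, and hence $L_X$ simultaneously block diagonal with the asserted blocks. First I would isolate the one topological fact that does all the work: if $C\subseteq X$ is closed and $P\in C$, then $\cl(\{P\})\subseteq C$, simply because $\cl(\{P\})$ is the smallest closed set containing $P$. In the language of Definition~\ref{def:specialization} this says that a closed set is \emph{upward closed} for the specialization order, i.e.\ $P\in C$ and $P\le Q$ force $Q\in C$. Each $X_i$ is in particular closed, so each $X_i$ enjoys this property.

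Next comes the key step. I claim there is no edge of $G_X$ between $X_i$ and $X_j$ for $i\neq j$. An edge between distinct vertices $P$ and $Q$ means $P\le Q$ or $Q\le P$ (Definition~\ref{def:laplacian}). Suppose $P\in X_i$, $Q\in X_j$, and say $P\le Q$; then $Q\in\cl(\{P\})\subseteq X_i$ by the previous paragraph, contradicting $Q\in X_j$ together with $X_i\cap X_j=\emptyset$. The case $Q\le P$ is symmetric. Hence $P$ and $Q$ are incomparable and there is no edge. Conversely, for two points in the same $X_i$ the condition $P\subseteq Q$ is intrinsic to the pair and does not depend on whether we read it in $X$ or in the subspace $X_i$ (the specialization preorder of a subspace is the restriction of the ambient one), so the full subgraph of $G_X$ on the vertex set $X_i$ is precisely $G_{X_i}$. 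Combining the two observations gives $G_X=\bigsqcup_{i=1}^r G_{X_i}$, the first assertion of the theorem.

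Finally I would translate the graph statement into matrices. Order the vertex set of $X$ so that the vertices of $X_1$ precede those of $X_2$, and so on, and let $\Pi$ be the permutation matrix that realizes this relabelling relative to an arbitrary initial ordering. Since no edge crosses between blocks, the off-diagonal blocks of the adjacency matrix vanish and its $i$-th diagonal block is the adjacency matrix $A_{X_i}$ of $G_{X_i}$; likewise the degree of any vertex $P\in X_i$ computed in $G_X$ equals its degree computed in $G_{X_i}$, so the $i$-th diagonal block of $D_X$ is $D_{X_i}$. Subtracting, $\Pi^{-1}L_X\Pi=\mathrm{diag}(L_{X_1},\dots,L_{X_r})$, which is exactly the claimed permutation-similarity.

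I do not anticipate a genuine obstacle: the only points requiring care are the interaction between closedness and the specialization order (the ``upward closed'' remark) and the fact that the induced subgraph on each $X_i$ coincides with $G_{X_i}$; both are immediate once spelled out. The finiteness hypothesis is used only so that $G_X$, $A_X$, $D_X$, $L_X$ are honest finite matrices and the block decomposition is literally a matrix statement.
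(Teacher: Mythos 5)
Your proof is correct and follows the same route as the paper's: both arguments reduce the claim to the observation that a clopen (in your version, merely closed) piece absorbs all points comparable to its members via $\cl(\{P\})$, so no edge of $G_X$ crosses between components, and the matrix statement follows by reordering vertices. The only cosmetic difference is that you handle the case $Q\le P$ by invoking closedness of $X_j$ rather than openness of $X_i$, which is an equally valid bookkeeping choice.
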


\begin{proof}
If $X_i$ is clopen, then it is closed under specialization and generalization:
if $P\in X_i$ and $P\le Q$ or $Q\le P$, then $Q\in \cl(\{P\})\subseteq X_i$ because $X_i$ is closed, and similarly for open.
Thus no comparability relation can occur between points in different $X_i$'s.
Hence $G_X$ has no edges across components and is the disjoint union of the induced subgraphs.
The Laplacian of a disjoint union is block diagonal up to relabeling of vertices.
\end{proof}

\subsection{Algebraic connectivity and geometric connectedness}

We now link the second Laplacian eigenvalue (Fiedler value) to connectedness.
The graph-theoretic statement is classical \cite{Fiedler1973,Chung1997}.

\begin{theorem}[Fiedler connectivity criterion]\label{thm:fiedler}
Let $G$ be a finite undirected graph with Laplacian $L$.
Let $0=\lambda_1\le \lambda_2\le\cdots\le \lambda_n$ be the eigenvalues of $L$.
Then $\lambda_2>0$ if and only if $G$ is connected.
\end{theorem}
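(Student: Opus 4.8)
The plan is to prove this via the standard spectral-graph argument built on the quadratic form of the Laplacian, which is entirely self-contained. First I would record the fundamental identity: for any vector $x=(x_P)_{P\in X}\in\mathbb{R}^n$, one has $x^\top L x = \sum_{\{P,Q\}\in E}(x_P-x_Q)^2$, where the sum runs over edges of $G$. This follows by expanding $L=D-A$ entrywise and is the only ``calculation'' in the proof; it shows immediately that $L$ is positive semidefinite, so all eigenvalues are nonnegative and the ordering $0\le\lambda_1\le\cdots\le\lambda_n$ makes sense. It also shows $\lambda_1=0$ always, since the all-ones vector $\mathbf{1}$ satisfies $L\mathbf{1}=0$ (each row of $L$ sums to zero). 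Hence the real content is characterizing when the eigenvalue $0$ has multiplicity exactly one.

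Next I would identify the kernel of $L$ explicitly. From the quadratic-form identity, $Lx=0$ iff $x^\top L x=0$ iff $x_P=x_Q$ for every edge $\{P,Q\}$, i.e.\ iff $x$ is constant on each connected component of $G$. Therefore $\dim\ker L$ equals the number $c$ of connected components of $G$, with an orthogonal basis of $\ker L$ given by the indicator vectors of the components. This is the key structural fact.

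The theorem then follows by a short logical step. If $G$ is connected, then $c=1$, so $\dim\ker L=1$, meaning the eigenvalue $0$ is simple; since $\lambda_1=0$, we get $\lambda_2>0$. Conversely, if $G$ is disconnected, then $c\ge 2$, so $\dim\ker L\ge 2$, forcing $\lambda_1=\lambda_2=0$, hence $\lambda_2=0$. Contrapositively, $\lambda_2>0$ implies $G$ is connected. This gives the biconditional.

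The main obstacle is not conceptual but bookkeeping: one must be careful that ``connected components of the graph $G$'' is the correct notion here (graph-theoretic connectivity of $G_X$), not topological connectedness of $X$; the bridge between the two is supplied separately by Theorem~\ref{thm:block} and is not needed for this purely graph-theoretic statement. A second minor point to handle cleanly is the passage from ``$x$ constant on components'' to a genuine \emph{orthogonal} eigenbasis of the kernel: the component-indicator vectors have disjoint supports, so they are automatically orthogonal, and they are linearly independent, which suffices to compute the dimension of the kernel. With those two observations in place the proof is immediate from the quadratic-form identity.
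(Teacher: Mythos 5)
Your proof is correct, but note that the paper does not actually prove Theorem~\ref{thm:fiedler} at all: it declares the statement ``classical'' and delegates it to the cited references (Fiedler~1973, Chung~1997). So your contribution is not a variant of the paper's argument but a self-contained replacement for a citation. The argument you give is the standard one and it is complete: the identity $x^\top L x=\sum_{\{P,Q\}\in E}(x_P-x_Q)^2$ yields positive semidefiniteness and $L\mathbf{1}=0$, and the identification $\ker L=\{x:\ x \text{ constant on each component}\}$ gives $\dim\ker L=c$, the number of components, from which the biconditional is immediate. One small point worth making explicit when you write this up: the step ``$x^\top Lx=0$ iff $Lx=0$'' is not a tautology --- the nontrivial direction uses that $L$ is symmetric positive semidefinite (e.g.\ diagonalize $L$, or write $L=M^\top M$ for an oriented incidence matrix $M$, so that $x^\top Lx=\|Mx\|^2$). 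Since you establish semidefiniteness first, this is fine, but the logical dependence should be stated rather than folded into an ``iff'' chain. Your closing remark is also well taken: the theorem is purely graph-theoretic, and the bridge to topological connectedness of $\SpecG(T)$ is the separate content of Theorem~\ref{thm:connectedness}, so keeping the two notions of connectivity apart, as you do, is exactly right.
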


\begin{theorem}[Connectedness of finite $\Gamma$-spectra via $\lambda_2$]\label{thm:connectedness}
Let $X=\SpecG(T)$ be finite. Then $X$ is connected (as a topological space) if and only if $\lambda_2(L_X)>0$.
\end{theorem}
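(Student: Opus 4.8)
The plan is to reduce the topological assertion to the graph‑theoretic Fiedler criterion (Theorem~\ref{thm:fiedler}) by identifying topological connectedness of the finite space $X=\SpecG(T)$ with connectedness of the comparability graph $G_X$ of Definition~\ref{def:laplacian}.

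\emph{Step 1: on a finite spectrum the Zariski topology is the specialization (Alexandrov) topology.} Since $X$ is finite, iterating the identity $V(I)\cup V(J)=V(I\cap J)$ from Proposition~\ref{prop:zariski} gives, for every $S\subseteq X$,
\[
\cl(S)=V\Big(\bigcap_{P\in S}P\Big)=\bigcup_{P\in S}V(P)=\bigcup_{P\in S}\cl(\{P\}).
\]
Hence a subset of $X$ is closed exactly when it is an up‑set for the specialization order ($P\le Q$ and $P$ in the set $\Rightarrow Q$ in the set), and dually open exactly when it is a down‑set. Therefore $Y\subseteq X$ is clopen if and only if $Y$ is closed under passing to comparable points in \emph{either} direction, i.e.\ if and only if $Y$ is a union of connected components of $G_X$.

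\emph{Step 2: topological connectedness $\Leftrightarrow$ connectedness of $G_X$.} If $G_X$ is disconnected, pick a connected component $C$ with $\emptyset\neq C\subsetneq X$; by Step~1 both $C$ and $X\setminus C$ are nonempty clopen sets, so $X$ is disconnected. Conversely, if $X=Y_1\sqcup Y_2$ with $Y_1,Y_2$ nonempty clopen, then by Step~1 each $Y_j$ is simultaneously an up‑set and a down‑set, so no point of $Y_1$ is comparable to a point of $Y_2$; hence $G_X$ has no edge between $Y_1$ and $Y_2$ and is disconnected. (This is exactly the connected‑component reading of Theorem~\ref{thm:block}.)

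\emph{Step 3: conclude.} Applying Theorem~\ref{thm:fiedler} to $G_X$ with Laplacian $L_X$ yields $\lambda_2(L_X)>0$ if and only if $G_X$ is connected; combined with Step~2 this gives $X$ connected $\iff\lambda_2(L_X)>0$. The main obstacle is Step~1: one must be careful that on a \emph{finite} spectrum ``Zariski‑closed'' really coincides with ``specialization up‑set'' (the analogous statement can fail for infinite spectra). The decisive input is finiteness, which turns the inclusion‑theoretic description $\cl(S)=V(\bigcap_{P\in S}P)$ into the finite union $\bigcup_{P\in S}\cl(\{P\})$; after that the argument is purely formal.
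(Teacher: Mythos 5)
Your proposal is correct and follows essentially the same route as the paper: identify topological connectedness of the finite spectrum with connectedness of the comparability graph $G_X$ via the clopen/component correspondence, then invoke Theorem~\ref{thm:fiedler}. Your Step~1 merely makes explicit (via $\cl(S)=V(\bigcap_{P\in S}P)=\bigcup_{P\in S}\cl(\{P\})$) the Alexandrov-topology fact that the paper asserts more briefly, which is a welcome but not essentially different addition.
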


\begin{proof}
For a finite $T_0$ space, connectedness is equivalent to connectedness of the undirected graph obtained from the specialization order
(equivalently, the Hasse diagram viewed undirected). Indeed, if $X$ decomposes into two disjoint clopen subsets, there is no specialization
relation between them, so the comparability graph disconnects; conversely, if the comparability graph disconnects, the vertex set splits into
two subsets closed under specialization and generalization, which are therefore clopen.

Apply Theorem~\ref{thm:fiedler} to the specialization graph $G_X$.
\end{proof}

\section{Phase IV: explicit computed finite examples}\label{sec:examples}

All examples below are finite idempotent semirings (hence Theorem~\ref{thm:nambu-identity} applies),
and we take $\Gamma$ to be the trivial group so that $u_\gamma=1$.
The ternary operation is $\{a b c\}:=abc$ (binary product of three elements), which in these idempotent examples is the meet $\wedge$.

\subsection{Example 1: a connected two-point spectrum (Sierpi\'nski space)}\label{subsec:ex1}

\begin{example}[Three-element chain semiring]\label{ex:chain3}
Let $T=\{0,e,1\}$ with addition $x+y:=\max\{x,y\}$ (join) and multiplication $x\cdot y:=\min\{x,y\}$ (meet),
with order $0<e<1$. This is a commutative idempotent semiring (a distributive lattice).
\end{example}

\begin{proposition}\label{prop:spec-chain3}
The prime ideals of $T$ are $P_0=\{0\}$ and $P_1=\{0,e\}$.
Thus $\SpecG(T)=\{P_0,P_1\}$ with specialization $P_0\subset P_1$, and the topology is the Sierpi\'nski topology.
\end{proposition}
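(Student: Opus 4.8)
The plan is to compute $\SpecG(T)$ directly from the lattice structure of $T=\{0,e,1\}$, using Remark~\ref{rem:prime-binary} to reduce primeness to the usual binary condition $xy\in P\Rightarrow x\in P$ or $y\in P$, together with the fact (Definition~\ref{def:gamma-ideal}) that $\Gamma$-ideals are just ordinary semiring ideals because $\Gamma$ is trivial and $u_1=1$. First I would enumerate the ideals: since multiplication is $\min$ and addition is $\max$, a subset $I$ is an ideal iff it is a \emph{down-set} (order ideal) closed under $\max$; in a three-element chain the down-sets are $\emptyset$, $\{0\}$, $\{0,e\}$, and $\{0,e,1\}=T$, all of which are automatically $\max$-closed. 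Among these, the proper ones containing $0$ are $\{0\}$ and $\{0,e\}$ (we exclude $\emptyset$ since $0$ must lie in any additive submonoid, and exclude $T$ since prime ideals are proper).

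Next I would check primeness of each candidate. For $P_0=\{0\}$: if $xy=\min\{x,y\}=0$ then since $0$ is the bottom element and $e,1$ are both nonzero, $\min\{x,y\}=0$ forces $x=0$ or $y=0$, so $P_0$ is prime. For $P_1=\{0,e\}$: if $\min\{x,y\}\in\{0,e\}$, i.e.\ $\min\{x,y\}\neq 1$, then at least one of $x,y$ is not $1$, hence lies in $\{0,e\}=P_1$; so $P_1$ is prime. One should also confirm no other proper ideal is prime — but the enumeration already shows $\{0\}$ and $\{0,e\}$ are the only proper ideals at all, so nothing further is needed. Hence $\SpecG(T)=\{P_0,P_1\}$.

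For the topology, I would observe $P_0=\{0\}\subsetneq\{0,e\}=P_1$, so in the specialization preorder of Definition~\ref{def:specialization} we have $P_0\le P_1$ and the two points are not interchangeable, giving a genuine partial order. The closed sets are the $V(I)$ for the four ideals: $V(\{0\})=\SpecG(T)$, $V(\{0,e\})=\{P_1\}$, $V(T)=\emptyset$, and $V(\emptyset)$ is not relevant (or one notes $V$ is only applied to ideals). Thus the closed sets are $\emptyset$, $\{P_1\}$, and the whole space — equivalently the open sets are $\emptyset$, $\{P_0\}$, and the whole space — which is exactly the Sierpi\'nski topology with $P_0$ the generic (open) point and $P_1$ the closed point.

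I do not expect any real obstacle here: the entire argument is a finite check, and the only conceptual point is the reduction of $\Gamma$-primeness to ordinary primeness, which is already furnished by Remark~\ref{rem:prime-binary}. The mildest care needed is making sure the empty set is correctly excluded as a non-ideal (it fails to be an additive submonoid, lacking $0$) so that $\{0\}$ is genuinely the smallest ideal; after that the classification of closed sets and identification with the Sierpi\'nski topology is immediate.
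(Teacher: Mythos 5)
Your proof is correct and follows essentially the same route as the paper's: classify ideals of the chain as down-sets, observe that $\{0\}$ and $\{0,e\}$ are the only proper ones, and verify primeness of each via the meet. The only difference is that you spell out the closed sets $V(I)$ to confirm the Sierpi\'nski topology explicitly, which the paper leaves implicit.
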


\begin{proof}
Ideals in a finite chain are down-sets. The proper ideals are $\{0\}$ and $\{0,e\}$.
Both are prime: if $x\wedge y\in \{0\}$ then $x=0$ or $y=0$; if $x\wedge y\in \{0,e\}$ then $x\le e$ or $y\le e$.
\end{proof}

\begin{proposition}[Canonical Laplacian]\label{prop:lap-chain3}
The specialization graph is a single edge $P_0$--$P_1$.
Hence
\[
A=\begin{pmatrix}0&1\\1&0\end{pmatrix},\qquad
D=\begin{pmatrix}1&0\\0&1\end{pmatrix},\qquad
L=D-A=\begin{pmatrix}1&-1\\-1&1\end{pmatrix}.
\]
The eigenvalues are $0,2$ with eigenvectors $(1,1)$ and $(1,-1)$.
\end{proposition}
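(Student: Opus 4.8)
The plan is to read off the specialization graph $G_X$ directly from Proposition~\ref{prop:spec-chain3} and then carry out the (two-dimensional) linear algebra by hand. First I would invoke Proposition~\ref{prop:spec-chain3}: the space $X=\SpecG(T)$ consists of exactly the two points $P_0$ and $P_1$, and $P_0\subsetneq P_1$. In particular $P_0\neq P_1$ and the pair $\{P_0,P_1\}$ is comparable in the specialization order, which by Definition~\ref{def:laplacian} is precisely the condition for an edge joining $P_0$ and $P_1$. Since there are no further vertices, $G_X$ is the single-edge graph on two vertices, and both vertices have degree $1$.

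Next I would write down the matrices with the vertices ordered as $(P_0,P_1)$: the adjacency matrix $A$ has a single symmetric off-diagonal pair of $1$'s, the degree matrix $D$ is the $2\times 2$ identity since each degree equals $1$, and $L=D-A$ is the claimed matrix. For the spectrum, I would compute the characteristic polynomial $\det(L-\lambda I)=(1-\lambda)^2-1=\lambda(\lambda-2)$, giving eigenvalues $0$ and $2$. Solving $Lv=0$ forces $v_1=v_2$, yielding the eigenvector $(1,1)$; solving $(L-2I)v=0$ forces $v_1=-v_2$, yielding $(1,-1)$. These two vectors are mutually orthogonal, confirming the diagonalization.

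There is essentially no obstacle: the only point requiring a moment's care is the correct reading of Definition~\ref{def:laplacian}, namely that ``comparable'' means comparable in \emph{either} direction, so that the strict inclusion $P_0\subsetneq P_1$ genuinely produces an edge rather than being excluded as a non-equality. As a consistency check one observes that $\lambda_2=2>0$, in agreement with Theorem~\ref{thm:connectedness} and the connectedness of the Sierpi\'nski space.
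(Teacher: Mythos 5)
Your proposal is correct and follows exactly the paper's route: the edge comes from the comparability $P_0\subset P_1$ established in Proposition~\ref{prop:spec-chain3}, and the rest is the immediate $2\times 2$ eigen-computation. You simply spell out the characteristic-polynomial step that the paper leaves as "immediate."
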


\begin{proof}
There is an edge because $P_0\subset P_1$. The eigen-computation is immediate.
\end{proof}

\subsection{Example 2: a disconnected spectrum and exact block decomposition}\label{subsec:ex2}

\begin{example}[Product of Boolean semirings]\label{ex:boolean-product}
Let $B=\{0,1\}$ with $+$ as $\vee$ and $\cdot$ as $\wedge$.
Set $T=B\times B$ with componentwise operations.
\end{example}

\begin{proposition}\label{prop:spec-boolean-product}
The prime ideals of $T$ are
\[
P_1=\{0\}\times B,\qquad P_2=B\times\{0\}.
\]
The $\Gamma$-spectrum is the discrete two-point space $\{P_1,P_2\}$, hence disconnected.
\end{proposition}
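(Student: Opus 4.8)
The plan is to identify the prime ideals of $T = B \times B$ explicitly by exploiting Remark~\ref{rem:prime-binary}, which reduces primeness to the usual binary condition $xy \in P \Rightarrow x \in P$ or $y \in P$, and then to read off the topology. First I would enumerate the ideals of $T$: since $B \times B$ is a finite distributive lattice (indeed a Boolean algebra with four elements $(0,0), (1,0), (0,1), (1,1)$) and ideals are down-sets closed under the lattice operations, the proper ideals are exactly $\{(0,0)\}$, $P_1 = \{0\} \times B = \{(0,0),(0,1)\}$, and $P_2 = B \times \{0\} = \{(0,0),(1,0)\}$. The zero ideal $\{(0,0)\}$ is not prime, because $(1,0)\cdot(0,1) = (0,0) \in \{(0,0)\}$ but neither factor is in it. So the only candidates are $P_1$ and $P_2$.

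Next I would verify $P_1$ and $P_2$ are prime. For $P_1$: if $(a_1,a_2)(b_1,b_2) = (a_1 b_1, a_2 b_2) \in P_1$, then $a_1 b_1 = 0$ in $B$, so $a_1 = 0$ or $b_1 = 0$ (since $B$ is a field-like two-element meet semilattice with no zero divisors in the relevant sense: $a_1 b_1 = a_1 \wedge b_1 = 0$ forces one of them to be $0$), hence $(a_1,a_2) \in P_1$ or $(b_1,b_2) \in P_1$. The argument for $P_2$ is symmetric. Thus $\SpecG(T) = \{P_1, P_2\}$.

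Finally I would determine the topology via the specialization order of Definition~\ref{def:specialization}: $P_1 \subseteq P_2$ would require $\{0\}\times B \subseteq B \times \{0\}$, which fails since $(0,1) \notin P_2$; symmetrically $P_2 \not\subseteq P_1$. So $P_1$ and $P_2$ are incomparable, neither is in the closure of the other, every singleton is closed, and the space is the discrete two-point space, which is disconnected. The main obstacle, such as it is, is purely bookkeeping: making sure the ideal enumeration is complete (no non-obvious ideal is missed) and that the "no zero divisors in each factor" step for $B$ is stated cleanly, since $B$ is a semiring rather than a ring; but Remark~\ref{rem:prime-binary} and the explicit four-element computation make this routine.
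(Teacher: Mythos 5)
Your proposal is correct, and it takes a slightly different (more hands-on) route than the paper. The paper's proof invokes the general principle that primes of a finite product correspond to primes of the factors pulled back along the projections, then observes that the two resulting primes are maximal and incomparable; you instead enumerate all proper ideals of $B\times B$ directly and test each for primeness via Remark~\ref{rem:prime-binary}. Your route is arguably safer in this setting, since the product-decomposition of prime ideals is a standard fact for rings but deserves a word of justification for semirings, whereas your verification is entirely self-contained. The one point you should make explicit in the enumeration step is why the down-set $\{(0,0),(0,1),(1,0)\}$ is \emph{not} an ideal: it fails additive closure because $(0,1)+(1,0)=(1,1)$ lies outside it. With that line added, your list of proper ideals is demonstrably complete, and the rest of the argument (non-primeness of the zero ideal via $(1,0)\cdot(0,1)=(0,0)$, primeness of $P_1$ and $P_2$ componentwise, incomparability forcing closed singletons and hence the discrete topology) is exactly right and matches the paper's conclusion.
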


\begin{proof}
Prime ideals of a finite product correspond to primes in each factor.
In $B$ the unique proper prime ideal is $\{0\}$. Taking inverse images under the projections yields $P_1$ and $P_2$.
Both are maximal and incomparable, so the topology is discrete.
\end{proof}

\begin{proposition}[Laplacian and spectrum]\label{prop:lap-boolean-product}
The specialization graph has no edges (the two primes are incomparable), hence
\[
L=\begin{pmatrix}0&0\\0&0\end{pmatrix},
\qquad \text{eigenvalues } 0,0.
\]
This is the block decomposition of Theorem~\ref{thm:block} with two clopen components.
\end{proposition}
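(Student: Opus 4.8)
The plan is to read the Laplacian straight off Proposition~\ref{prop:spec-boolean-product} together with the definition of the specialization order (Definition~\ref{def:specialization}) and of the canonical Laplacian (Definition~\ref{def:laplacian}). First I would recall that $\SpecG(T)=\{P_1,P_2\}$ with $P_1=\{0\}\times B$ and $P_2=B\times\{0\}$, and that both of these $\Gamma$-ideals are maximal. Two distinct maximal ideals are never comparable under inclusion --- if $P_1\subseteq P_2$ with $P_1$ maximal and $P_2$ proper, then $P_1=P_2$ --- so $P_1$ and $P_2$ are incomparable in the specialization preorder. Since incomparable vertices carry no edge in $G_X$ and these are the only two points of $X$, the specialization graph $G_X$ is edgeless.

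Next I would simply transcribe the matrices. An edgeless graph on two vertices has adjacency matrix $A_X=\left(\begin{smallmatrix}0&0\\0&0\end{smallmatrix}\right)$ and degree matrix $D_X=\left(\begin{smallmatrix}0&0\\0&0\end{smallmatrix}\right)$, hence $L_X=D_X-A_X$ is the $2\times 2$ zero matrix. Its characteristic polynomial is $\lambda^{2}$, so the spectrum of $L_X$ is $\{0,0\}$; in particular $\lambda_1=\lambda_2=0$, which is consistent with Theorem~\ref{thm:connectedness} since $X$ is disconnected.

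Finally, to exhibit this as the block decomposition of Theorem~\ref{thm:block}, I would note that the topology on $X$ is discrete by Proposition~\ref{prop:spec-boolean-product}, so each singleton $X_1=\{P_1\}$ and $X_2=\{P_2\}$ is clopen and $X=X_1\sqcup X_2$. The induced subgraph $G_{X_i}$ is a single vertex with no edges, so each $L_{X_i}$ is the $1\times 1$ zero matrix, and the block-diagonal matrix promised by Theorem~\ref{thm:block} is $\operatorname{diag}(0,0)$, matching the direct computation. There is essentially no obstacle here: the statement functions as a consistency check on Theorems~\ref{thm:block} and \ref{thm:connectedness}, and the only point worth spelling out is the elementary fact that distinct maximal ideals are incomparable, which is exactly what forces $G_X$ to be edgeless.
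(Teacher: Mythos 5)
Your proposal is correct and follows the same route as the paper, which simply notes that incomparability of the two primes (already established in Proposition~\ref{prop:spec-boolean-product}) gives $A=0$ and hence $L=0$. Your extra details --- the maximality argument for incomparability and the explicit matching with the block decomposition of Theorem~\ref{thm:block} --- are sound elaborations of the same argument, not a different approach.
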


\begin{proof}
No comparability means no adjacency, hence $A=0$ and $L=0$.
\end{proof}

\subsection{Example 3: a three-point connected spectrum with nontrivial algebraic connectivity}\label{subsec:ex3}

\begin{example}[Four-element chain semiring]\label{ex:chain4}
Let $T=\{0<a<b<1\}$ with join as addition and meet as multiplication (a four-element chain lattice).
\end{example}

\begin{proposition}\label{prop:spec-chain4}
The prime ideals are
\[
P_0=\{0\},\qquad P_1=\{0,a\},\qquad P_2=\{0,a,b\},
\]
forming a chain $P_0\subset P_1\subset P_2$.
Thus $\SpecG(T)$ is connected and its specialization graph is the complete graph $K_3$.
\end{proposition}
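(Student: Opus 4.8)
The plan is to mimic the proof of Proposition~\ref{prop:spec-chain3}, using that the ideal theory of a finite chain lattice is completely explicit. Since $\Gamma$ is trivial and $u_1=1$, the $\Gamma$-ideals of $T$ are just the ordinary semiring ideals (Definition~\ref{def:gamma-ideal}). First I would show these are exactly the nonempty down-sets of the chain $0<a<b<1$: closure under the addition $\vee$ is automatic for a down-set (a join of two comparable elements is one of them), and closure under multiplication by arbitrary $t\in T$ reads $t\wedge x\in I$ whenever $x\in I$; as $t\wedge x=\min(t,x)\le x$ in a chain, this both forces downward closure and is guaranteed by it. Discarding the improper ideal $T$ leaves precisely $P_0=\{0\}$, $P_1=\{0,a\}$, $P_2=\{0,a,b\}$ as the only candidates.

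Next I would check each $P_i$ is prime via the binary reformulation of Remark~\ref{rem:prime-binary}: a proper ideal $P$ is prime iff $x\wedge y\in P$ implies $x\in P$ or $y\in P$. Writing $P=\{z:z\le m\}$ with top element $m$, the key observation is that in a totally ordered set $\min(x,y)\le m$ holds if and only if $x\le m$ or $y\le m$ --- one direction is monotonicity, the other because whichever of $x,y$ attains the minimum is itself $\le m$. This settles primeness of all three down-sets simultaneously, so $\SpecG(T)=\{P_0,P_1,P_2\}$.

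Finally, the specialization preorder (Definition~\ref{def:specialization}) is inclusion of primes, which here is the single chain $P_0\subset P_1\subset P_2$; hence all three pairs of points are comparable and the specialization graph (Definition~\ref{def:laplacian}) has all $\binom{3}{2}=3$ edges, i.e.\ it is $K_3$. Connectedness of $X$ follows at once (directly, since $K_3$ is connected, or via Theorem~\ref{thm:connectedness}; the Laplacian spectrum is $0,3,3$, so $\lambda_2=3>0$). The one genuinely load-bearing step is ``every proper down-set is prime,'' which uses that $T$ is a \emph{chain} and not merely a distributive lattice; the rest is routine bookkeeping parallel to Example~\ref{ex:chain3}.
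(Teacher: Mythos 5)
Your proposal is correct and follows exactly the route of the paper's own (much terser) proof: proper ideals of a finite chain lattice are down-sets, each is prime by the chain argument $\min(x,y)\le m\Leftrightarrow x\le m$ or $y\le m$, and pairwise comparability of the three primes gives $K_3$. You simply spell out the details the paper delegates to Proposition~\ref{prop:spec-chain3}, and your remark that primeness of \emph{every} proper down-set genuinely uses the total order (failing in, e.g., $B\times B$) is an accurate identification of the load-bearing step.
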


\begin{proof}
As in Proposition~\ref{prop:spec-chain3}, the proper ideals are down-sets.
Each of the three displayed ideals is prime by the chain argument.
Any two of them are comparable, hence the comparability graph is $K_3$.
\end{proof}

\begin{proposition}[Laplacian and eigenvalues]\label{prop:lap-chain4}
With vertex order $(P_0,P_1,P_2)$, the adjacency and Laplacian matrices are
\[
A=\begin{pmatrix}
0&1&1\\
1&0&1\\
1&1&0
\end{pmatrix},\qquad
D=\begin{pmatrix}
2&0&0\\
0&2&0\\
0&0&2
\end{pmatrix},\qquad
L=D-A=\begin{pmatrix}
2&-1&-1\\
-1&2&-1\\
-1&-1&2
\end{pmatrix}.
\]
The eigenvalues are $0,3,3$. In particular, $\lambda_2=3>0$, confirming connectedness by Theorem~\ref{thm:connectedness}.
\end{proposition}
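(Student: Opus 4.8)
The plan is to read the combinatorial data off Proposition~\ref{prop:spec-chain4} and then diagonalize a single $3\times3$ matrix by hand.

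First I would recall that Proposition~\ref{prop:spec-chain4} already identifies the specialization graph $G_X$ with the complete graph $K_3$: the three primes $P_0\subset P_1\subset P_2$ form a chain, so every pair of them is comparable, hence every pair of vertices of $G_X$ is joined by an edge. With the vertex ordering $(P_0,P_1,P_2)$ this forces the adjacency matrix to be $A=J-I$ (the all-ones matrix minus the identity), every vertex to have degree $2$ so that $D=2I$, and therefore $L=D-A=2I-(J-I)=3I-J$; this is exactly the displayed triple of matrices.

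Next, to obtain the spectrum, I would diagonalize $L=3I-J$ using the standard eigenstructure of the all-ones matrix $J$ on $\mathbb{R}^3$: $J$ has eigenvalue $3$ on the line spanned by $(1,1,1)$ and eigenvalue $0$ on the two-dimensional orthogonal complement (spanned, for instance, by $(1,-1,0)$ and $(1,0,-1)$). Hence $L$ has eigenvalue $3-3=0$ on $(1,1,1)$ and eigenvalue $3-0=3$ with multiplicity two, so the eigenvalues of $L$ are $0,3,3$. An equally quick alternative is a direct cofactor expansion giving the characteristic polynomial $\det(L-\lambda I)=-\lambda(\lambda-3)^2$.

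Finally I would observe that $\lambda_2(L_X)=3>0$, so Theorem~\ref{thm:connectedness} confirms that $\SpecG(T)$ is connected, in agreement with Proposition~\ref{prop:spec-chain4}. There is no genuine obstacle in this proof; the only care needed is to keep the vertex ordering consistent so the matrices match the statement verbatim, and to cite the $J$-spectrum rather than recompute it from scratch.
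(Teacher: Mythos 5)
Your proposal is correct and follows the same route as the paper, which simply notes that the Laplacian of $K_3$ is well known and that a direct computation gives eigenvalues $0,3,3$; your $L=3I-J$ decomposition just makes that direct computation explicit. No issues.
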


\begin{proof}
The Laplacian of $K_3$ is well known \cite{Chung1997}. A direct computation gives eigenvalues $0,3,3$.
\end{proof}

\appendix

\section{Optional application: fuzzy robustness viewpoint}\label{app:fuzzy}

This appendix is intentionally conservative: it records only basic definitions and a stability statement
needed to interpret ``robustness'' of prime separation under small perturbations, without impacting the main SIGMA narrative.

\begin{definition}[Fuzzy subset and $\alpha$-cuts]\label{def:fuzzy}
A \emph{fuzzy subset} of $T$ is a function $\mu:T\to[0,1]$ \cite{Zadeh1965}.
For $\alpha\in(0,1]$, its \emph{$\alpha$-cut} is the crisp subset
\[
[\mu]_\alpha:=\{x\in T\mid \mu(x)\ge \alpha\}.
\]
\end{definition}

\begin{definition}[Fuzzy $\Gamma$-ideal (one standard choice)]\label{def:fuzzy-ideal}
A fuzzy subset $\mu:T\to[0,1]$ is a \emph{fuzzy $\Gamma$-ideal} if for all $x,y,z\in T$ and $\gamma\in\Gamma$:
\begin{enumerate}
\item $\mu(0)=1$ and $\mu(x+y)\ge \min\{\mu(x),\mu(y)\}$,
\item $\mu(\{x\,y\,z\}_\gamma)\ge \mu(x)$.
\end{enumerate}
\end{definition}

\begin{remark}
Many equivalent or stronger definitions exist in the fuzzy algebra literature \cite{Goguen1967,Lowen1976,MordesonMalik1998}.
The above is sufficient for the elementary stability lemma below.
\end{remark}

\begin{lemma}[Sup-norm stability of $\alpha$-cuts]\label{lem:stability}
Let $\mu,\nu:T\to[0,1]$ be fuzzy subsets and set $\|\mu-\nu\|_\infty:=\sup_{x\in T}|\mu(x)-\nu(x)|$.
Then for every $\alpha\in(0,1]$ and every $\varepsilon>0$,
\[
\|\mu-\nu\|_\infty\le \varepsilon\quad\Longrightarrow\quad
[\mu]_{\alpha+\varepsilon}\subseteq [\nu]_{\alpha}\subseteq [\mu]_{\alpha-\varepsilon},
\]
where $[\mu]_{\beta}:=\emptyset$ if $\beta>1$ and $[\mu]_{\beta}:=T$ if $\beta\le 0$.
\end{lemma}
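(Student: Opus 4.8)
The plan is to reduce the statement to a single pointwise two-sided inequality and then verify each of the two set inclusions by a one-line threshold comparison, handling the degenerate ranges of $\alpha\pm\varepsilon$ via the stated extended-cut conventions. No property of fuzzy $\Gamma$-ideals (Definition~\ref{def:fuzzy-ideal}) is used: the claim concerns level sets of arbitrary $[0,1]$-valued functions, so the proof is purely about $\alpha$-cuts in the sense of Definition~\ref{def:fuzzy}.

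First I would observe that the hypothesis $\|\mu-\nu\|_\infty\le\varepsilon$ is equivalent to the pointwise bound $\mu(x)-\varepsilon\le\nu(x)\le\mu(x)+\varepsilon$ for every $x\in T$. This is the only quantitative input; the rest is bookkeeping with thresholds.

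Next I would establish the left inclusion $[\mu]_{\alpha+\varepsilon}\subseteq[\nu]_\alpha$. If $\alpha+\varepsilon>1$ the left-hand set is empty by convention and there is nothing to prove. Otherwise, take $x$ with $\mu(x)\ge\alpha+\varepsilon$; the lower estimate gives $\nu(x)\ge\mu(x)-\varepsilon\ge\alpha$, so $x\in[\nu]_\alpha$, which is a genuine $\alpha$-cut since $\alpha\in(0,1]$. Then I would establish the right inclusion $[\nu]_\alpha\subseteq[\mu]_{\alpha-\varepsilon}$. If $\alpha-\varepsilon\le0$ the right-hand set equals $T$ by convention and the inclusion is immediate. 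Otherwise, for $x$ with $\nu(x)\ge\alpha$ the lower estimate gives $\mu(x)\ge\nu(x)-\varepsilon\ge\alpha-\varepsilon$, hence $x\in[\mu]_{\alpha-\varepsilon}$. Chaining the two inclusions yields the displayed sandwich.

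There is no substantive obstacle here; the only point requiring care is to invoke the extended-threshold conventions $[\,\cdot\,]_\beta=\emptyset$ for $\beta>1$ and $[\,\cdot\,]_\beta=T$ for $\beta\le0$ consistently, so that the chain of inclusions remains meaningful precisely when $\alpha+\varepsilon$ exceeds $1$ or $\alpha-\varepsilon$ drops to $0$ or below.
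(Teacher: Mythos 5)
Your proof is correct and follows essentially the same route as the paper's: the pointwise bound $|\mu(x)-\nu(x)|\le\varepsilon$ turns each threshold comparison into a one-line inequality, with the extended-cut conventions disposing of the degenerate cases. You are simply more explicit about the boundary conventions than the paper, which states only the first inclusion and declares the other analogous.
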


\begin{proof}
If $\mu(x)\ge \alpha+\varepsilon$ and $|\mu(x)-\nu(x)|\le \varepsilon$, then $\nu(x)\ge \alpha$.
The other inclusion is analogous.
\end{proof}

\section{Optional appendix: a conservative spectral clustering pipeline}\label{app:clustering}

Given a finite $\Gamma$-spectrum $X$ and its canonical Laplacian $L_X$ (Definition~\ref{def:laplacian}),
one may apply standard spectral clustering heuristics \cite{VonLuxburg2007,NgJordanWeiss2002}.
We include this only as an illustrative computational layer (not a mathematical claim about learning).

\begin{enumerate}
\item Compute the eigenpairs of $L_X$ (exactly in small examples; numerically in general) \cite{Golub1996}.
\item Choose $k$ and form the matrix $U\in\mathbb{R}^{|X|\times k}$ whose columns are eigenvectors corresponding to the $k$ smallest eigenvalues.
\item Normalize the rows of $U$ and run $k$-means on the embedded points.
\item Interpret clusters as a coarse partition of $X$; if $X$ is disconnected, Theorem~\ref{thm:block} yields an \emph{exact}
decomposition already at the level of $L_X$.
\end{enumerate}

\bigskip


\end{document}